\DeclareMathOperator*{\argmax}{argmax}
\newtheorem{theorem}{Theorem}
\newtheorem{lemma}{Lemma}
\newtheorem{definition}{Definition}
\newtheorem{assumption}{Assumption}
\newcommand{\dif}{\mathrm{d}}
\title{Risk-Sensitive Markov Decision Processes with Combined Metrics of Mean and Variance\thanks{This paper was supported in part by the National Natural
Science Foundation of China (61573206, 11931018). This paper was
initially submitted to the journal of Production and Operations
Management (POMS) in July 2019, revised in December 2019, March and
May 2020, and accepted in July 2020.}}
\author{Li Xia\thanks{Li Xia is with the Business School, Sun Yat-Sen University, Guangzhou 510275, P. R. China. (email: xiali5@sysu.edu.cn. \emph{Corresponding Author.})}}
\date{}
\begin{document}
\maketitle

\begin{abstract}
This paper investigates the optimization problem of an infinite
stage discrete time Markov decision process (MDP) with a long-run
average metric considering both mean and variance of rewards
together. Such performance metric is important since the mean
indicates average returns and the variance indicates risk or
fairness. However, the variance metric couples the rewards at all
stages, the traditional dynamic programming is inapplicable as the
principle of time consistency fails. We study this problem from a
new perspective called the sensitivity-based optimization theory. A
performance difference formula is derived and it can quantify the
difference of the mean-variance combined metrics of MDPs under any
two different policies. The difference formula can be utilized to
generate new policies with strictly improved mean-variance
performance. A necessary condition of the optimal policy and the
optimality of deterministic policies are derived. We further develop
an iterative algorithm with a form of policy iteration, which is
proved to converge to local optima both in the mixed and randomized
policy space. Specially, when the mean reward is constant in
policies, the algorithm is guaranteed to converge to the global
optimum. Finally, we apply our approach to study the fluctuation
reduction of wind power in an energy storage system, which
demonstrates the potential applicability of our optimization method.
\end{abstract}
\textbf{Keywords}: Markov decision process, risk-sensitive, mean and
variance, sensitivity-based optimization

\section{Introduction}\label{section_intro}
The theory of Markov decision processes (MDPs) is widely used to
analyze and optimize the performance of a stochastic dynamic system.
In the framework of MDPs, $R_T$ is denoted as the accumulated
(discounted or undiscounted) rewards until time $T$, which is a
random variable. Many studies of MDPs focus on the discounted or
long-run average performance criteria, i.e., maximization of
$\mathbb E[R_T]$. However, its variance $\sigma^2[R_T]$ is also an
important metric in practical problems, which can reflect the
factors of risk, fairness, quality or safety. For example, the
mean-variance analysis for the risk management in financial
portfolio or hedging \citep{Kouvelis18}, the fairness of customers
served in queueing systems \citep{Avi-Itzhak04}, the power quality
and safety of renewable energy to electricity grid \citep{Li14}, the
optimal advertising of fashion markets and supply chain management
\citep{Chiu18,Zhang20}, etc. Optimization theory of risk-sensitive
MDPs considering not only the average metric but also the variance
metric is an important problem attracting continued attention in the
literature.

However, the optimization of the variance metric does not fit a
standard model of MDPs because of the non-additive variance
function. The value of the variance function will be affected by
actions of not only the current stage but also the future stages
\citep{Bertsekas05,Feinberg02,Puterman94}. Thus, the Bellman
optimality equation does not hold and the principle of time
consistency (roughly speaking, if a decision at time $t+1$ is
optimal, then it should constitute part of optimal decisions at time
$t$) in dynamic programming fails \citep{Ruszczynski10,Shapiro09}.
One main thread of studying the variance optimization problem lays
in the framework of the MDP theory, which aims at minimizing the
variance after the mean performance already achieves optimum. For
the policy subset where the mean already achieves optimum, the
variance minimization problem is equivalent to another standard MDP
with a new average cost criterion. Interested readers can refer to
related papers in the literature, such as the work by \cite{Cao08}
about the $n$-bias optimality of MDPs, the works by
\cite{Guo09b,Guo12} about discounted MDPs, and the work by
\cite{Hernandez99} about average MDPs, just name a few. For these
equivalent MDPs, traditional approaches like dynamic programming or
policy iteration can be used to minimize the variance of MDPs.

Another thread of studying the variance-related optimization problem
is the policy gradient approach for risk-sensitive MDPs
\citep{Prashanth13,Tamar13}, which is more from the algorithmic
viewpoint in reinforcement learning. In this scenario, the policy is
usually parameterized by sophisticated functions, e.g., neural
networks. In general, such optimization problem is not a standard
MDP (a parameterized one) and the policy iteration is not
applicable. For such scenarios, stochastic gradient-based algorithms
are usually adopted to approach to a local optimal solution.
Although neural networks have strong policy representation
capability, such gradient-based algorithms usually converge slowly
and suffer from some intrinsic deficiencies, such as trap of local
optimum, difficulty of selecting step sizes, and sensitivity to
initial points. Seeking policy iteration type algorithms is a
promising attempt to study the MDPs with variance criteria. A recent
study investigates the variance minimization problem regardless of
the optimality of mean performance from the perspective of the
sensitivity-based optimization, where algorithms with a policy
iteration type are developed for the variance minimization problem
of MDPs with averages and parameterized policies, respectively
\citep{Xia16,Xia18b}. It is worth noting that there exist other
risk-sensitive metrics besides the variance-related one. For
example, exponential utility function is also a risk-sensitive
metric since the first two orders of Taylor expansions of
exponential utility include the mean and the variance
\citep{Bauerle2015,Borkar2002,Guo19}. Percentile-related criterion,
VaR (Value at Risk), and CVaR (Conditional VaR) are other widely
studied risk-sensitive metrics in the field of finance and economy
\citep{Chow15,Delage2010,Fu09,Gao17,Hong14}. Motivated by the modern
theory of coherent risk measures \citep{Artzner99}, time-consistent
Markov risk measures are also studied from another point of view in
risk-aware MDPs literature \citep{Ruszczynski10,Ruszczynski06}.
\cite{Haskell13} study the stochastic dominance of accumulated
rewards rather than the expectation of rewards, which is another
interesting way to study the risk-aware metrics in MDPs.

In the literature, some studies consider the performance
optimization of mean and variance together, which is called the
mean-variance optimization. The pioneering work of mean-variance
optimization is initiated by \cite{Markowitz52}, the Nobel Laureate
in Economics. Intensive attention has been paid during past decades
and the mathematical model is extended from single-stage static
problem to multi-stage dynamic one. Markov models are usually
adopted when we study the mean-variance optimization in a stochastic
dynamic scenario. The related works on the dynamic mean-variance
optimization can be categorized into three classes. The first one is
to maximize the mean performance at the condition that the variance
is less than a given amount. The second one is to minimize the
variance while keeping the mean performance larger than a given
amount. The third one is to optimize the combination of the mean and
the variance together, such as maximizing the \emph{Sharpe ratio}
$\frac{\mathbb E[R_T]}{\sigma[R_T]}$ or the weighted combination
$\mathbb E[R_T] - \beta\sigma^2[R_T]$. \cite{Chung94} and
\cite{Sobel94} study the mean-variance optimization in a regime of
long-run average MDPs, by converting the constrained MDP problem to
a mathematical programming problem. Numerical algorithms are also
studied for the multistage mean-variance optimization problem from
the viewpoint of mathematical programming \citep{Parpas07}. For
discounted MDPs, the variance minimization problem with a given
discounted average performance can be converted to an equivalent
discounted MDP with a new cost function
\citep{Huang18,Huo17,Xia18a}, and traditional MDP methods such as
policy iteration are applicable. In some general settings, it has
been shown that the mean-variance optimization problem in MDPs is
NP-hard \citep{Mannor13}. Another way to study the mean-variance
optimization is from the perspective of optimal control, in which
the model is usually continuous time or even continuous state and
the optimality structures of control laws are investigated for
specific scenarios \citep{Zhou00,Zhou04}. In recent years,
reinforcement learning attracts intensive attention for the great
success of AlphaGo. Although most reinforcement learning algorithms
focus on discounted criterion, some policy gradient algorithms are
also investigated to optimize the combined metrics of mean and
variance \citep{Borkar2010,Tamar12}. However, such gradient-based
approach still suffers from the deficiency of local optimum and slow
convergence. On the other hand, as we know, there are three
categories of approaches to solve MDP problems: policy iteration,
value iteration, and policy gradient, where the policy iteration is
a classical and important approach. Although the algorithmic
complexity of the policy iteration is still an open problem
\citep{Littman95}, it is observed that the policy iteration usually
converges very fast in most cases. Therefore, it is promising to
develop policy iteration type algorithms to solve the mean-variance
optimization problem.

In this paper, we study the optimization of the mean-variance
combined metric in the framework of MDPs. The objective is to find
the optimal policy that maximizes the mean minus variance metrics,
$\mathbb{E}[R_T] - \beta \sigma^2[R_T]$. Different coefficients
$\beta$ tradeoff the weights between the mean and the variance.
Since the variance-related metric is not additive, this dynamic
optimization problem does not fit the standard model of MDPs. The
time consistency of dynamic programming is not valid and the Bellman
optimality equation does not hold. We study this problem from a new
perspective called the theory of sensitivity-based optimization
\citep{Cao07}, which is rooted from the theory of perturbation
analysis \citep{Ho91} and largely extended to stochastic dynamic
systems with Markov models. The key idea of the sensitivity-based
optimization is to utilize the performance difference and derivative
information to search the optimal policy. This theory is applicable
for general Markov systems and it does not require an additive cost
function. For the mean-variance combined metric, we derive a
difference formula which quantifies the difference of the combined
metrics under any two policies. The coupling effect caused by the
non-additive variance function can be decoupled by a square term in
the difference formula. Based on the difference formula, we derive a
method to strictly improve the system mean-variance metrics. A
necessary condition of the optimal policy is also derived. We
further prove that the optimal policy with the maximal mean-variance
combined metric can always be achieved in the deterministic policy
space. A policy iteration type algorithm is developed to find the
optimal policy, which is proved to converge to a local optimum both
in the mixed and randomized policy space. Moreover, this algorithm
can find the global optimum if the mean reward remains constant for
different policies. Some exploration techniques are also discussed
to improve the global search capability of the algorithm. Finally,
we use an example in a renewable energy storage system to
demonstrate that our approach can effectively reduce the fluctuation
of the total output power, which improves the output power quality
and reduces the stability risk of power grid. The local convergence
and the global convergence of our approach are both demonstrated in
different scenarios of this example.

The main contributions of the paper are as follows. We use the
sensitivity-based optimization theory to study the performance
optimization of mean-variance combined metrics in Markov systems,
which is a new perspective different from the traditional dynamic
programming. Although the principle of dynamic programming is not
applicable, we derive a difference formula to quantify the changing
behaviors of mean-variance combined metrics with respect to
different policies. Optimality conditions and a policy iteration
type algorithm for optimal policies are also derived. To the best of
our knowledge, our paper is the first work that a policy iteration
type algorithm is developed to optimize the mean-variance combined
metrics in MDPs. Compared with traditional gradient-based
algorithms, policy iteration usually has a much faster convergence
speed, which is partly demonstrated by our numerical experiments.

This paper is substantially extended based on its conference version
\citep{Xia19}, where the main theorems, algorithms, and experiments
are not presented. The difference of this paper from our previous
work \citep{Xia16} is that this paper optimizes both the mean and
variance metrics together, while \cite{Xia16} only minimizes the
variance metric without any consideration of the mean performance.
It is important to consider the mean and variance metrics together
in real life. Many works about dynamic mean-variance optimization
focus on the variance of accumulated discounted rewards at a
terminal stage, where the contraction operator is a key tool. There
are much less works studying the mean-variance optimization of
rewards in the sense of long-run averages, where the contraction
operator is not applicable. Therefore, our work on the long-run
mean-variance combined metrics could give a complementary result to
the whole theory of mean-variance optimization. We hope that this
paper could shed light on studying the dynamic mean-variance
optimization, especially in the scenario with long-run average
metrics (instead of discounted ones) and policy iteration type
algorithms (instead of gradient ones).

The remainder of the paper is organized as follows. In
Section~\ref{section_model}, we formulate the performance
optimization problem of mean-variance combined metrics under the
framework of MDPs. The main challenge of this problem is also
discussed. In Section~\ref{section_result}, we use the
sensitivity-based optimization theory to derive the difference
formula for the mean-variance combined metrics. Optimality
structures and necessary condition of the optimal policy are also
derived. Furthermore, we develop an iterative algorithm to find the
optimal policy in Section~\ref{section_algo}. The convergence
analysis of the algorithm is also studied. In
Section~\ref{section_expriment}, we use an example of renewable
energy storage systems to numerically illustrate the local and
global convergence of our approach, respectively. Finally, we
conclude this paper and discuss some future research topics in
Section~\ref{section_conclusion}.

\section{Problem Formulation}\label{section_model}
We consider a discrete time Markov decision process. The state space
and the action space are both finite and denoted as $\mathcal
S:=\{1,2, \cdots, S\}$ and $\mathcal A:=\{a_1,a_2,\cdots,a_A\}$,
respectively. In general, $\mathcal A$ can also be state-dependent,
i.e., $\mathcal A(i)$ and $i \in \mathcal S$, but we omit this case
for simplicity of representation. A stationary policy is a mapping
from the state space to the action space, $d: \mathcal S \rightarrow
\mathcal A$, where $d$ is deterministic since its optimality is
proved in the later section. The stationary policy space is denoted
as $\mathcal D$. At each state $i \in \mathcal S$, an action $a =
d(i)$ will be adopted if policy $d \in \mathcal D$ is being
employed. Then the system will transit to the next state $j$ with
transition probability $p^a(i,j)$ and an instant reward $r(i,a)$
will be incurred, $i,j\in \mathcal S$. We denote $\bm P^d$ as the
transition probability matrix whose elements are $p^{d(i)}(i,j)$'s,
$i,j\in \mathcal S$. We know that $\bm P^d$ is an $S$-by-$S$ matrix
and $\bm P^d \bm 1 = \bm 1$, where $\bm 1$ is an $S$-dimensional
column vector with all elements being 1. We further denote $\bm r^d$
as an $S$-dimensional column vector whose elements are
$r(i,d(i))$'s, $i \in \mathcal S$. Note that we may also remove the
superscript ``$^d$" and use $\bm P$ and $\bm r$ for notation
simplicity. We make the following assumption about the MDP model in
this paper.

\begin{assumption}\label{assumption1}
We only focus on stationary policies  in this paper and the MDP
under any policy $d \in \mathcal D$ is always ergodic.
\end{assumption}

We denote $\pi^d(i)$ as the stationary probability that the system
stays at state $i$ when policy $d$ is employed. Therefore, the
vector of steady state distribution is denoted as an $S$-dimensional
row vector $\bm \pi^d:=(\pi^d(1),\pi^d(2),\cdots,\pi^d(S))$, which
can be determined by the following equations
\begin{equation}\label{eq_pi}
\begin{array}{l}
\bm \pi^d \bm P^d = \bm \pi^d, \\
\bm \pi^d \bm 1 = 1.
\end{array}
\end{equation}
We define the \emph{long-run average} performance of the MDP
\begin{equation}\label{eq_J}
J_{\mu}^d := \bm \pi^d \bm r^d = \lim\limits_{T \rightarrow \infty}
\frac{1}{T} \mathbb{E}^d\left\{ \sum_{t=0}^{T-1} r(X_t,A_t)
\right\},
\end{equation}
where $\mathbb{E}^d$ indicates the expectation under policy $d$,
$X_t$ is the system state at time $t$, $A_t = d(X_t)$ and it is the
action at time $t$. With Assumption~\ref{assumption1}, the MDP under
any policy $d$ is always ergodic, thus $J_{\mu}^d$ is independent of
the initial state $X_0$.

We also define the \emph{steady state variance} (or called
\emph{long-run variance}) of rewards of the MDP
\citep{Chung94,Sobel94}
\begin{equation}\label{eq_var}
J_{\sigma}^d := \lim\limits_{T \rightarrow \infty} \frac{1}{T}
\mathbb{E}^d\left\{ \sum_{t=0}^{T-1} \left[r(X_t,A_t) -
J_{\mu}^d\right]^2 \right\}.
\end{equation}
Note that the steady state variance $J_{\sigma}^d$ defined above is
different from the variance of the accumulated rewards
$R_T:=\sum_{t=0}^{T}\alpha^t r(X_t,A_t)$ with discount factor
$\alpha$. The latter definition is also widely studied in the MDP
literature and it reflects the variation of the terminal wealth at
time $T$. However, it does not reflect the variation during the
reward process.

In this paper, we study the following \emph{mean-variance combined
metric}
\begin{equation}\label{eq_Jmuvar}
J_{\mu,\sigma}^d := J^d_{\mu} - \beta J_{\sigma}^d,
\end{equation}
where $\beta > 0$ and it is a weight between $J^d_{\mu}$ and
$J_{\sigma}^d$. With definitions (\ref{eq_J}) and (\ref{eq_var}), we
can rewrite (\ref{eq_Jmuvar}) as
\begin{equation}
J_{\mu,\sigma}^d = \lim\limits_{T \rightarrow \infty} \frac{1}{T}
\mathbb{E}^d\left\{ \sum_{t=0}^{T-1} r(X_t,A_t) - \beta
[r(X_t,A_t)-J_{\mu}^d]^2 \right\} = \bm \pi^d \bm f_{\mu,\sigma}^d,
\end{equation}
where $\bm f_{\mu,\sigma}^d$ is a column vector with $S$-dimension
and its element $f_{\mu,\sigma}^d(i)$ is defined as
\begin{equation}\label{eq_f}
f_{\mu,\sigma}^d(i) := r(i,d(i)) - \beta [r(i,d(i))-J_{\mu}^d]^2,
\quad i \in \mathcal S.
\end{equation}
Thus, the optimization of the mean-variance combined metric is
represented by the long-run average performance of the MDP with the
new cost function (\ref{eq_f}). The objective is to find the optimal
policy $d^*$ which maximizes the associated value of
$J_{\mu,\sigma}^{d^*}$, i.e.,
\begin{equation}\label{eq_prob}
d^* = \argmax\limits_{d \in \mathcal D} \{ J_{\mu,\sigma}^{d} \}.
\end{equation}

The problem (\ref{eq_prob}) can also be viewed as a multi-objective
optimization problem, one objective is the mean, and the other one
is the variance. The value of $(J_{\mu}^{d^*}, J_{\sigma}^{d^*})$
associated with the optimal policy $d^*$ derived in (\ref{eq_prob})
is a \emph{Pareto} optimum of the multi-objective optimization
problem. Different $\beta$ will induce different solution to
(\ref{eq_prob}). All the associated pairs $(J_{\mu}^{d^*},
J_{\sigma}^{d^*})$ compose the \emph{Pareto frontier} (or efficient
frontier) of the mean-variance optimization problem.

The optimization problem (\ref{eq_prob}) is of significance in
practical applications, as the variance-related metric has rich
physical meanings. In financial systems, the variance metric can be
used to quantify the \emph{risk} of assets, which is widely used in
portfolio management and financial hedging \citep{Kouvelis18}. In
service systems, the variance of customers' waiting time can be used
to quantify the \emph{fairness} of service. The study of service
fairness is an interesting topic in queueing theory
\citep{Avi-Itzhak04}. In industrial engineering, the variance can be
used to quantify the \emph{quality} of products, such as the
variation minimization control in chemical process industry
\citep{Harrison09}. In power grid, the variance can be used to
quantify the \emph{stability} of output power, which is critical for
the safety of the grid \citep{Li14}. We also use an example of power
grid to demonstrate the applicability of our optimization approach
in Section~\ref{section_expriment}.

However, it is challenging to solve (\ref{eq_prob}). The
optimization problem (\ref{eq_prob}) does not fit the standard model
of MDPs, because its cost function (\ref{eq_f}) includes a term of
$J_{\mu}^d$. The value of $J_{\mu}^d$ is affected by not only the
action selection at the current stage, but also those at future
stages. Thus, the action selection in future stages will affect the
``instant" cost (\ref{eq_f}) at the current stage, which indicates
that (\ref{eq_f}) is not \emph{additive} and violates the
requirement of a standard MDP model. The Bellman optimality equation
does not hold and the principle of dynamic programming fails for
this problem \citep{Puterman94}. In the next section, we will study
this optimization problem with mean-variance combined metrics from a
new perspective, called the sensitivity-based optimization theory.

\section{Sensitivity-Based Optimization}\label{section_result}
The theory of sensitivity-based optimization is proposed by
\cite{Cao07} and it provides a new perspective to study the
performance optimization of Markov systems. The origin of this
theory can be traced back to the theory of \emph{perturbation
analysis} \citep{Ho91} and its basic idea is to exploit sensitivity
information from system sample paths, thus to guide the system
optimization. The sensitivity information includes not only
performance gradients, but also performance difference. A key result
of the sensitivity-based optimization is the performance difference
formula which quantifies the difference of the system performance
under any two policies or parameter settings. This theory does not
require a standard model of MDPs and it is valid for Markov systems
with general controls, even works in some scenarios where the
traditional dynamic programming fails \citep{Cao07,Xia14}. One of
the main contributions of our paper is to derive a mean-variance
combined performance difference formula for solving (\ref{eq_prob}),
which has an elegant form with a square term to handle the failure
of dynamic programming.

\subsection{Performance Difference of Mean-Variance Combined Metrics}
The theory of sensitivity-based optimization has a fundamental
quantity called \emph{performance potential} $g^d_{\mu,\sigma}(i)$,
which is defined as below \citep{Cao07}
\begin{equation}\label{eq_g}
g^d_{\mu,\sigma}(i) := \lim\limits_{T \rightarrow \infty} \mathbb
E^d \left\{ \sum_{t=0}^{T} [f^d_{\mu,\sigma}(X_t) -
J^d_{\mu,\sigma}]\Big|{X_0=i} \right\}, \quad  i \in \mathcal S, d
\in \mathcal D.
\end{equation}
We further denote $\bm g^d_{\mu,\sigma}$ as an $S$-dimensional
column vector whose elements are $g^d_{\mu,\sigma}(i)$'s, $i \in
\mathcal S$. From (\ref{eq_g}), $g^d_{\mu,\sigma}(i)$ can be
understood as a quantity which accumulates the advantages of the
system rewards over the average reward $J^d_{\mu,\sigma}$, caused by
the condition of the initial state $i$. If the value of
$g^d_{\mu,\sigma}(i)$ is positively large, it indicates that the
initial state $i$ has a good position with a high potential,
compared with the long-run average level $J^d_{\mu,\sigma}$.
Therefore, $\bm g^d_{\mu,\sigma}$ is also called \emph{bias} or
\emph{relative value function} in the classical MDP theory
\citep{Puterman94}. To compute the value of $g^d_{\mu,\sigma}(i)$ in
(\ref{eq_g}), we have to obtain the long-run average
$J^d_{\mu,\sigma}$ first. Meanwhile, the value of $J^d_{\mu,\sigma}$
depends on the calculation of $J^d_{\mu}$ and $J^d_{\sigma}$
according to (\ref{eq_Jmuvar}). Substituting (\ref{eq_Jmuvar}) and
(\ref{eq_f}) into (\ref{eq_g}), we can further derive
\begin{eqnarray}\label{eq9}
g^d_{\mu,\sigma}(i) &=& \lim\limits_{T \rightarrow \infty} \mathbb
E^d \left\{ \sum_{t=0}^{T} [r(X_t) - \beta (r(X_t)-J^d_{\mu})^2 -
J^d_{\mu} + \beta J^d_{\sigma}]\Big|{X_0=i} \right\} \nonumber\\
&=& \lim\limits_{T \rightarrow \infty} \mathbb E^d \left\{
\sum_{t=0}^{T} [r(X_t) - J^d_{\mu}]\Big|{X_0=i} \right\} - \beta
\lim\limits_{T \rightarrow \infty} \mathbb E^d \left\{
\sum_{t=0}^{T}
[(r(X_t)-J^d_{\mu})^2 - J^d_{\sigma}]\Big|{X_0=i} \right\} \nonumber\\
&=& g^d_{\mu}(i) - \beta g^d_{\sigma}(i),
\end{eqnarray}
where $g^d_{\mu}(i)$ and $g^d_{\sigma}(i)$ are the corresponding
performance potentials of the MDP with cost functions $r(i)$ and
$(r(i) - J^d_{\mu})^2$, respectively, using similar definitions in
(\ref{eq_g}) as follows
\begin{equation}
g^d_{\mu}(i) := \lim\limits_{T \rightarrow \infty} \mathbb E^d
\left\{ \sum_{t=0}^{T} [r(X_t) - J^d_{\mu}]\Big|{X_0=i} \right\},
\quad  i \in \mathcal S, d \in \mathcal D, \nonumber
\end{equation}
\begin{equation}
g^d_{\sigma}(i) := \lim\limits_{T \rightarrow \infty} \mathbb E^d
\left\{ \sum_{t=0}^{T} [(r(X_t)-J^d_{\mu})^2 -
J^d_{\sigma}]\Big|{X_0=i} \right\}, \quad  i \in \mathcal S, d \in
\mathcal D. \nonumber
\end{equation}
Equation (\ref{eq9}) indicates that the performance potentials
conserve the linear additivity if the cost function can be linearly
separated into different parts.

Using the strong Markov property and extending the summation terms
of (\ref{eq_g}) at time $t=0$, we derive a recursive equation
\begin{eqnarray}\label{eq_g2}
g^d_{\mu,\sigma}(i) &=& f^d_{\mu,\sigma}(i) - J^d_{\mu,\sigma} +
\lim\limits_{T \rightarrow \infty} \mathbb E^d \left\{
\sum_{t=1}^{T} [f^d_{\mu,\sigma}(X_t) -
J^d_{\mu,\sigma}]\Big|{X_0=i} \right\}
\nonumber\\
&=& f^d_{\mu,\sigma}(i) - J^d_{\mu,\sigma} + \sum_{j \in \mathcal S}
p^{d(i)}(i,j) g^d_{\mu,\sigma}(j),
\end{eqnarray}
where the second equality is derived by recursively applying
(\ref{eq_g}). We can further rewrite (\ref{eq_g2}) in a matrix form
and derive the following \emph{Poisson equation}:
\begin{equation}\label{eq_poisson}
\bm g^d_{\mu,\sigma} = \bm f^d_{\mu,\sigma} - J^d_{\mu,\sigma} \bm
1+ \bm P^d \bm g^d_{\mu,\sigma}.
\end{equation}
It is known that $\bm P^d$ is a stochastic matrix and its rank is
$S-1$. Thus, we can set $g^d_{\mu,\sigma}(1)=c$ and
(\ref{eq_poisson}) can be numerically solved with a unique solution,
where $c$ is any fixed constant. That is, $\bm g^d_{\mu,\sigma} + c
\bm 1$ is also a solution to (\ref{eq_poisson}) for any constant
$c$. Moreover, we can also use the definition (\ref{eq_g}) to online
learn or estimate the value of $\bm g^d_{\mu,\sigma}$ based on
system sample paths (see details in Chapter 3 of \citep{Cao07} and
reinforcement learning \citep{Sutton18}).

Next, we are ready to study the mean-variance combined performance
difference between $J^d_{\mu,\sigma}$ and $J^{d'}_{\mu,\sigma}$ when
the policy of MDPs is changed from $d$ to $d'$. To simplify
notations, we omit the superscript ``$^{d}$" by default and use the
superscript ``$^{'}$" instead of ``$^{d'}$" in the rest of the paper
if applicable. We write the cost function (\ref{eq_f}) in a vector
form
\begin{equation}\label{eq_f2}
\bm f_{\mu,\sigma} = \bm r - \beta (\bm r - J_{\mu} \bm
1)^2_{\odot},
\end{equation}
where $(\bm r - J_{\mu} \bm 1)^2_{\odot}$ is the component-wise
square of vector $(\bm r - J_{\mu} \bm 1)$, i.e.,
\begin{equation}
(\bm r - J_{\mu} \bm 1)^2_{\odot} :=
[(r(1)-J_{\mu})^2,(r(2)-J_{\mu})^2,\cdots,(r(S)-J_{\mu})^2]^T.
\nonumber
\end{equation}
Thus, the mean-variance combined performance metric under policy $d$
is
\begin{equation}
J_{\mu,\sigma} = \bm \pi \bm f_{\mu,\sigma} = \bm \pi [\bm r - \beta
(\bm r - J_{\mu} \bm 1)^2_{\odot}].
\end{equation}
Similarly, the mean-variance combined performance metric under
policy $d'$ is
\begin{equation}\label{eq14}
J'_{\mu,\sigma} = \bm \pi' \bm f'_{\mu,\sigma} = \bm \pi' [\bm r' -
\beta (\bm r' - J'_{\mu} \bm 1)^2_{\odot}].
\end{equation}
Substituting (\ref{eq_f2}) into (\ref{eq_poisson}), we can rewrite
the Poisson equation as
\begin{equation}\label{eq_poisson2}
\bm g_{\mu,\sigma} = \bm r - \beta (\bm r - J_{\mu} \bm 1)^2_{\odot}
- J_{\mu,\sigma} \bm 1+ \bm P \bm g_{\mu,\sigma}.
\end{equation}
By left-multiplying $\bm \pi'$ on both sides of (\ref{eq_poisson2}),
we have
\begin{equation}\label{eq16}
\bm \pi' \bm P' \bm g_{\mu,\sigma} = \bm \pi' \bm r - \beta \bm \pi'
(\bm r - J_{\mu} \bm 1)^2_{\odot} - J_{\mu,\sigma} + \bm \pi' \bm P
\bm g_{\mu,\sigma},
\end{equation}
where we use the fact $\bm \pi' \bm P' = \bm \pi'$ and $\bm \pi' \bm
1 = 1$. By letting (\ref{eq14}) minus (\ref{eq16}), we obtain
\begin{equation}\label{eq_dif}
J'_{\mu,\sigma} - J_{\mu,\sigma} = \bm \pi'\Big[ (\bm P' - \bm P)\bm
g_{\mu,\sigma} + \bm r' - \beta (\bm r' - J'_{\mu} \bm 1)^2_{\odot}
- \bm r + \beta (\bm r - J_{\mu} \bm 1)^2_{\odot} \Big].
\end{equation}

Furthermore, we extend $\bm \pi' (\bm r' - J'_{\mu} \bm
1)^2_{\odot}$ and derive
\begin{eqnarray}\label{eq18}
\bm \pi' (\bm r' - J'_{\mu} \bm 1)^2_{\odot} &=& \sum_{i \in
\mathcal S} \Big[ \pi'(i) r'^2(i) - 2 \pi'(i) r'(i) J'_{\mu} +
\pi'(i) J'^2_{\mu} \Big] \nonumber\\
&=& \sum_{i \in \mathcal S} \pi'(i) r'^2(i) - 2 J'_{\mu} \sum_{i \in
\mathcal S} \pi'(i) r'(i) + J'^2_{\mu} \sum_{i \in \mathcal S}
\pi'(i) \nonumber\\
&=& \sum_{i \in \mathcal S} \pi'(i) r'^2(i) - J'^2_{\mu} \nonumber\\
&=& \sum_{i \in \mathcal S} \pi'(i) r'^2(i) - 2 J_{\mu} \sum_{i \in
\mathcal S} \pi'(i) r'(i) + J^2_{\mu} \sum_{i \in \mathcal S}
\pi'(i) - (J'_{\mu}-J_{\mu})^2 \nonumber\\
&=& \bm \pi' (\bm r' - J_{\mu} \bm 1)^2_{\odot} -
(J'_{\mu}-J_{\mu})^2,
\end{eqnarray}
where the fact $\sum_{i \in \mathcal S} \pi'(i) r'(i) = J'_{\mu}$ is
utilized. By substituting (\ref{eq18}) into (\ref{eq_dif}), we
directly derive the following lemma about \emph{the mean-variance
combined performance difference formula}.

\begin{lemma}\label{lemma1}
If an MDP's policy is changed from $d$ to $d'$, the associated $(\bm
P, \bm r)$ is changed to $(\bm P', \bm r')$, then the difference of
the system mean-variance combined metrics is quantified by
\begin{eqnarray}\label{eq_dif2}
J'_{\mu,\sigma} - J_{\mu,\sigma} = \bm \pi'\Big[ (\bm P' - \bm P)\bm
g_{\mu,\sigma} + \bm r' - \beta (\bm r' - J_{\mu} \bm 1)^2_{\odot} -
\bm r + \beta (\bm r - J_{\mu} \bm 1)^2_{\odot} \Big] + \beta
(J'_{\mu}-J_{\mu})^2.
\end{eqnarray}
\end{lemma}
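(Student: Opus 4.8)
The plan is to derive the formula in the same spirit as the classical long-run average performance difference formula: pivot the Poisson equation of one policy against the stationary distribution of the other, and then peel off the non-additive variance part by hand. Concretely, I would start from the Poisson equation (\ref{eq_poisson2}) for policy $d$, namely $\bm g_{\mu,\sigma} = \bm r - \beta(\bm r - J_\mu\bm 1)^2_{\odot} - J_{\mu,\sigma}\bm 1 + \bm P\bm g_{\mu,\sigma}$, and left-multiply both sides by the steady-state row vector $\bm\pi'$ of the perturbed policy $d'$. Writing $\bm\pi'\bm P\bm g_{\mu,\sigma} = \bm\pi'\bm P'\bm g_{\mu,\sigma} - \bm\pi'(\bm P' - \bm P)\bm g_{\mu,\sigma}$ and using $\bm\pi'\bm P' = \bm\pi'$ (so that $\bm\pi'\bm P'\bm g_{\mu,\sigma}$ cancels the $\bm\pi'\bm g_{\mu,\sigma}$ on the left-hand side) together with $\bm\pi'\bm 1 = 1$, this collapses to the scalar identity (\ref{eq16}) that expresses $J_{\mu,\sigma}$ through $\bm\pi'$, $\bm r$, $\bm P' - \bm P$, and $\bm g_{\mu,\sigma}$. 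Subtracting (\ref{eq16}) from the trivial identity $J'_{\mu,\sigma} = \bm\pi'[\bm r' - \beta(\bm r' - J'_\mu\bm 1)^2_{\odot}]$, which is (\ref{eq14}), then yields the preliminary difference (\ref{eq_dif}).

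The only genuinely non-routine step — and the place where the failure of time consistency gets absorbed — is that (\ref{eq_dif}) still carries the term $\bm\pi'(\bm r' - J'_\mu\bm 1)^2_{\odot}$, in which the \emph{perturbed} mean $J'_\mu$ sits inside the component-wise square; this is exactly the cross-stage coupling that keeps (\ref{eq_prob}) from being a standard MDP. The resolution is to recenter the square at the \emph{old} mean $J_\mu$. For any scalar $c$, since $\bm\pi'\bm r' = J'_\mu$ and $\bm\pi'\bm 1 = 1$, one has $\bm\pi'(\bm r' - c\bm 1)^2_{\odot} = \bm\pi'(\bm r')^2_{\odot} - 2cJ'_\mu + c^2$; evaluating this at $c = J'_\mu$ and at $c = J_\mu$ and subtracting gives the completion of the square $\bm\pi'(\bm r' - J'_\mu\bm 1)^2_{\odot} = \bm\pi'(\bm r' - J_\mu\bm 1)^2_{\odot} - (J'_\mu - J_\mu)^2$, which is precisely (\ref{eq18}). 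Substituting (\ref{eq18}) for the term $\bm\pi'(\bm r' - J'_\mu\bm 1)^2_{\odot}$ in (\ref{eq_dif}) replaces the awkward variance term centered at $J'_\mu$ by the one centered at $J_\mu$, at the price of the single extra additive term $+\beta(J'_\mu - J_\mu)^2$, which is exactly the claimed formula (\ref{eq_dif2}).

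I expect the mechanical parts — the $\bm\pi'$-pivot and the two cancellations via $\bm\pi'\bm P' = \bm\pi'$ and $\bm\pi'\bm 1 = 1$ — to be entirely routine linear algebra. The main obstacle is conceptual rather than computational: recognizing that the $J'_\mu$ buried inside the variance term must be traded for $J_\mu$ through the recentering identity, because the resulting square $\beta(J'_\mu - J_\mu)^2$ is precisely what a naive additive (dynamic-programming) treatment would drop, and, being nonnegative, it is also what later makes the strict policy-improvement argument go through.
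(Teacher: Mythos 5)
Your proposal is correct and follows essentially the same route as the paper: left-multiplying the Poisson equation (\ref{eq_poisson2}) by $\bm\pi'$ to obtain (\ref{eq16}), subtracting from (\ref{eq14}) to get (\ref{eq_dif}), and then recentering the square via the identity (\ref{eq18}) to produce the residual term $\beta(J'_{\mu}-J_{\mu})^2$. No gaps; the signs and cancellations all check out.
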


An important feature of (\ref{eq_dif2}) is that every element in the
square brackets is given or computable under the current policy $d$,
although the explicit values of every possible $\bm \pi'$ and
$J'_{\mu}$ are unknown. The mean-variance combined performance
difference formula (\ref{eq_dif2}) is a key result of our paper to
solve (\ref{eq_prob}). It clearly quantifies the relationship
between the long-run mean-variance combined performance and the
adopted policies, where different policies are represented by
different $(\bm P, \bm r)$'s. It is worth noting that the square
term $\beta (J'_{\mu}-J_{\mu})^2$ is critical for the optimization
analysis in the rest of the paper because of the fact $\beta
(J'_{\mu}-J_{\mu})^2 \geq 0$. For the current system with policy
$(\bm P, \bm r)$, we can compute or learn the associated values of
$J_{\mu}$ and $\bm g_{\mu,\sigma}$. Although the value of $\pi'(i)$
is unknown, we always have the fact $\pi'(i)>0$ for ergodic MDPs
with Assumption~\ref{assumption1}. If we choose a new policy $d'$
with proper $(\bm P', \bm r')$ such that all the elements of the
column vector represented by the square brackets of (\ref{eq_dif2})
are positive, then we have $\bm \pi'\Big[ (\bm P' - \bm P)\bm
g_{\mu,\sigma} + \bm r' - \beta (\bm r' - J_{\mu} \bm 1)^2_{\odot} -
\bm r + \beta (\bm r - J_{\mu} \bm 1)^2_{\odot} \Big]
> 0$, which indicates $J'_{\mu,\sigma} - J_{\mu,\sigma} > 0$ and the
system performance under this new policy is improved. Repeating such
operations, we can iteratively improve the policy, which also
motivates the development of an optimization algorithm in the next
section.

\noindent\textbf{Remark~1.} To investigate the system behavior under
a new policy $(\bm P', \bm r')$, we usually need to obtain $\bm
\pi'$, $J'_{\mu}$, and $J'_{\mu,\sigma}$ with high computation
burdens. Fortunately, (\ref{eq_dif2}) avoids such burden by
utilizing the fact that $\bm \pi'$ and $(J'_{\mu}-J_{\mu})^2$ are
always nonnegative (although we do not know their explicit values).
This also reveals sensitivity information to guide the optimization
algorithm, which we call difference sensitivity (compared with
derivative sensitivity).

With the performance difference formula (\ref{eq_dif2}), we can
derive the following theorem about generating improved policies.
\begin{theorem}\label{theorem1}
If a new policy $d'$ with $(\bm P', \bm r')$ satisfies
\begin{equation}\label{eq_newp}
\sum_{j \in \mathcal S} p'(i,j) g_{\mu,\sigma}(j) + r'(i) - \beta
(r'(i) - J_{\mu})^2 \geq \sum_{j \in \mathcal S} p(i,j)
g_{\mu,\sigma}(j) + r(i) - \beta (r(i) - J_{\mu})^2, \quad \forall i
\in \mathcal S,
\end{equation}
then we have $J'_{\mu,\sigma} \geq J_{\mu,\sigma}$. If the
inequality strictly holds for at least one state $i$, then we have
$J'_{\mu,\sigma} > J_{\mu,\sigma}$.
\end{theorem}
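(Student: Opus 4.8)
The plan is to derive Theorem~\ref{theorem1} directly from the performance difference formula \eqref{eq_dif2} of Lemma~\ref{lemma1}. Rewriting \eqref{eq_dif2} component-wise, the quantity inside the square brackets at state $i$ is exactly
\[
\left[ \sum_{j \in \mathcal S} p'(i,j) g_{\mu,\sigma}(j) + r'(i) - \beta (r'(i) - J_{\mu})^2 \right] - \left[ \sum_{j \in \mathcal S} p(i,j) g_{\mu,\sigma}(j) + r(i) - \beta (r(i) - J_{\mu})^2 \right],
\]
so the hypothesis \eqref{eq_newp} says precisely that every component of that bracketed vector is nonnegative. The key observation is that under Assumption~\ref{assumption1} the MDP under policy $d'$ is ergodic, hence $\pi'(i) > 0$ for every $i \in \mathcal S$; therefore left-multiplying a nonnegative (componentwise) vector by the strictly positive row vector $\bm\pi'$ yields a nonnegative scalar. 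This handles the first term on the right-hand side of \eqref{eq_dif2}.

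First I would state the componentwise reformulation of \eqref{eq_newp} and conclude $\bm\pi'\big[(\bm P' - \bm P)\bm g_{\mu,\sigma} + \bm r' - \beta(\bm r' - J_{\mu}\bm 1)^2_{\odot} - \bm r + \beta(\bm r - J_{\mu}\bm 1)^2_{\odot}\big] \ge 0$. Next I would invoke the nonnegativity of the remaining term: $\beta(J'_{\mu} - J_{\mu})^2 \ge 0$ since $\beta > 0$. Adding the two gives $J'_{\mu,\sigma} - J_{\mu,\sigma} \ge 0$ via \eqref{eq_dif2}, proving the first assertion. For the strict part, I would note that if the inequality in \eqref{eq_newp} is strict at some state $i_0$, then the bracketed vector is nonnegative everywhere and strictly positive in coordinate $i_0$; since $\pi'(i_0) > 0$, the inner product $\bm\pi'[\cdots]$ is strictly positive, and adding the nonnegative term $\beta(J'_{\mu}-J_{\mu})^2$ keeps the sum strictly positive, so $J'_{\mu,\sigma} > J_{\mu,\sigma}$.

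There is essentially no hard obstacle here: the whole content of the theorem has already been loaded into Lemma~\ref{lemma1}, and the proof is just the observation that a nonnegative vector dotted with a strictly positive distribution is nonnegative (and strictly positive if the vector has a positive entry), plus the sign of the leftover square term. The only point requiring a word of care is making explicit that ergodicity forces $\pi'(i) > 0$ for all $i$, which is what upgrades "nonnegative at every state" to "nonnegative inner product" and, in the strict case, lets a single strict coordinate propagate to a strict inequality for the scalar. I would keep the proof to a few lines and reference \eqref{eq_dif2} and Assumption~\ref{assumption1} explicitly.
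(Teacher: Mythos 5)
Your proposal is correct and follows exactly the route the paper intends: the paper omits the proof of Theorem~\ref{theorem1} as ``straightforward based on (\ref{eq_dif2})'', and your argument---componentwise nonnegativity of the bracketed vector, strict positivity of $\bm\pi'$ from ergodicity under Assumption~\ref{assumption1}, and nonnegativity of $\beta(J'_{\mu}-J_{\mu})^2$---is precisely that straightforward argument, including the correct handling of the strict case.
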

%
%

The proof of Theorem~\ref{theorem1} is straightforward based on
(\ref{eq_dif2}). We omit the detailed proof for simplicity.
Theorem~\ref{theorem1} indicates an approach by which we can
generate an improved policy $d'$ based on the condition
(\ref{eq_newp}). That is, after we obtain the value of $\bm
g_{\mu,\sigma}$ and $J_{\mu}$ under the current policy $d$, we can
find a new policy with proper $(\bm P', \bm r')$ such that the value
of $\sum_{j \in \mathcal S} p'(i,j) g_{\mu,\sigma}(j) + r'(i) -
\beta (r'(i) - J_{\mu})^2$ at each state $i$ is as large as
possible. The mean-variance combined performance of the system under
this new policy will be improved, as guaranteed by
Theorem~\ref{theorem1}.

With Theorem~\ref{theorem1} and difference formula (\ref{eq_dif2}),
we can further derive the following theorem about a \emph{necessary
condition} of the optimal policy for (\ref{eq_prob}).
\begin{theorem}\label{theorem2}
For the optimization problem (\ref{eq_prob}), the optimal policy $d$
with $(\bm P, \bm r)$ must satisfy
\begin{equation}\label{eq_nec}
\sum_{j \in \mathcal S} p'(i,j) g_{\mu,\sigma}(j) + r'(i) - \beta
(r'(i) - J_{\mu})^2 \leq \sum_{j \in \mathcal S} p(i,j)
g_{\mu,\sigma}(j) + r(i) - \beta (r(i) - J_{\mu})^2, \quad \forall i
\in \mathcal S
\end{equation}
for any policy $d' \in \mathcal D$.
\end{theorem}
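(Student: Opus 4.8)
The plan is to argue by contradiction, using Theorem~\ref{theorem1} together with a single-state deviation policy. Since $\mathcal S$ and $\mathcal A$ are finite, $\mathcal D$ is finite and an optimal policy for (\ref{eq_prob}) exists; call it $d$ with $(\bm P,\bm r)$, and let $\bm g_{\mu,\sigma}$ and $J_\mu$ be the quantities it induces. Suppose, for contradiction, that (\ref{eq_nec}) fails. Then there exist a policy $d'\in\mathcal D$ and a state $i_0\in\mathcal S$ such that $\sum_{j\in\mathcal S} p'(i_0,j) g_{\mu,\sigma}(j) + r'(i_0) - \beta (r'(i_0) - J_{\mu})^2 > \sum_{j\in\mathcal S} p(i_0,j) g_{\mu,\sigma}(j) + r(i_0) - \beta (r(i_0) - J_{\mu})^2$.

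Next I would construct the policy $d''\in\mathcal D$ by $d''(i_0):=d'(i_0)$ and $d''(i):=d(i)$ for all $i\neq i_0$; by Assumption~\ref{assumption1} the MDP under $d''$ is ergodic, so $d''$ is admissible. The key is to verify that $d''$ satisfies the improvement condition (\ref{eq_newp}) of Theorem~\ref{theorem1} relative to the \emph{base} policy $d$, so that the $\bm g_{\mu,\sigma}$ and $J_\mu$ appearing there are exactly those induced by $d$ and remain frozen when we pass to $d''$. At state $i_0$ the transition row $p''(i_0,\cdot)$ and reward $r''(i_0)$ coincide with $p'(i_0,\cdot)$ and $r'(i_0)$, so the left-hand side of (\ref{eq_newp}) at $i_0$ equals the left-hand side of the assumed violated inequality, hence is strictly larger than the right-hand side. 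At every other state $i\neq i_0$, the action, transition row, and reward under $d''$ agree with those under $d$, so both sides of (\ref{eq_newp}) are identical and equality holds. Thus $d''$ satisfies (\ref{eq_newp}) with equality everywhere and a strict inequality at $i_0$.

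Applying Theorem~\ref{theorem1} then yields $J''_{\mu,\sigma} > J_{\mu,\sigma}$, which contradicts the optimality of $d$ for (\ref{eq_prob}). Therefore no such $d'$ and $i_0$ exist, i.e., (\ref{eq_nec}) holds for every $d'\in\mathcal D$ and every $i\in\mathcal S$.

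I do not anticipate a serious obstacle: this is the standard ``one-state deviation'' argument, and the non-additivity of the variance term has already been handled upstream—absorbed into the difference formula (\ref{eq_dif2}) and Theorem~\ref{theorem1} through the nonnegative square term $\beta(J''_\mu - J_\mu)^2$. The only points needing a moment of care are (i) confirming that deviating at a single state leaves (\ref{eq_newp}) an exact equality at every untouched state, which is immediate once one observes that the right-hand side of (\ref{eq_newp}) is evaluated at the fixed base policy $d$, and (ii) noting that $d''$ stays ergodic, which is exactly Assumption~\ref{assumption1}.
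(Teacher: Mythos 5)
Your proposal is correct and follows essentially the same route as the paper: both argue by contradiction, construct the single-state deviation policy that switches to the violating action at the offending state while keeping the optimal policy elsewhere, and conclude strict improvement from the difference formula (\ref{eq_dif2}) with the nonnegative square term $\beta(J'_\mu - J_\mu)^2$ (your detour through Theorem~\ref{theorem1} is just the paper's direct substitution into (\ref{eq_dif2}) packaged as a lemma). No gaps.
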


\begin{proof}
This theorem can be proved by using contradiction. Assume that the
condition (\ref{eq_nec}) does not hold for the optimal policy $d$
with $(\bm P, \bm r)$. That is, for some state, say state $k$, there
exists an action $a'$ such that
\begin{equation} \label{eq24}
\sum_{j \in \mathcal S}p'(k,j)g_{\mu,\sigma}(j) + r'(k) - \beta
(r'(k) - J_{\mu})^2 > \sum_{j \in \mathcal S}p(k,j)g_{\mu,\sigma}(j)
+ r(k) - \beta (r(k) - J_{\mu})^2.
\end{equation}
Therefore, we can construct a new policy $d'$ as follows: for state
$k$, choose action $a'$; for other states, choose exactly the same
action as that of the optimal policy $d$. Therefore, substituting
(\ref{eq24}) into (\ref{eq_dif2}), we obtain
\begin{eqnarray}
J'_{\mu,\sigma} - J_{\mu,\sigma} \hspace{-0.2cm}&=&\hspace{-0.2cm}
\pi'(k)\left[ \sum_{j \in \mathcal S}(p'(k,j) -
p(k,j))g_{\mu,\sigma}(j) + r'(k) - \beta (r'(k) - J_{\mu})^2 - r(k)
+ \beta (r(k) -
J_{\mu})^2 \right] \nonumber\\
&& + \beta (J'_{\mu}-J_{\mu})^2 \nonumber\\
&>& 0 + \beta (J'_{\mu}-J_{\mu})^2 \geq  0. \nonumber
\end{eqnarray}
Therefore, we have $J'_{\mu,\sigma} > J_{\mu,\sigma}$, which means
that the policy $d'$ is better than the optimal policy $d$. This
contradicts the assumption that $d$ is the optimal policy. Thus, the
assumption does not hold and the theorem is proved.
\end{proof}

Note that the condition (\ref{eq_nec}) is only a necessary condition
for the optimal policy, not a sufficient condition. The reason comes
from the square term $\beta(J'_{\mu}-J_{\mu})^2$ in the difference
formula (\ref{eq_dif2}). It is possible to construct a new policy
$d'$ such that the first part of the right-hand-side of
(\ref{eq_dif2}) is negative, while the second part
$\beta(J'_{\mu}-J_{\mu})^2$ is relatively larger. Thus, $d'$ is
better than $d$, while the condition (\ref{eq_nec}) is unsatisfied.

\noindent\textbf{Remark~2.} If the system mean rewards under any
policy in $\mathcal D$ are the same, (\ref{eq_nec}) becomes the
\emph{necessary and sufficient condition} of the optimal policy.
This is also consistent with the existing results in the literature
in which the optimization objective is to minimize the variance
after the mean reward is already optimal \citep{Cao08,Guo09b}.

\subsection{Performance Derivative of Mean-Variance Combined Metrics}
In the theory of sensitivity-based optimization, performance
derivative formula is another fundamental concept compared with the
difference formula. Based on the performance derivative,
gradient-based methods can be utilized to solve the mean and
variance optimization problems in the literature since the
traditional dynamic programming method cannot be directly applied
\citep{Prashanth13,Tamar12}. Below, we study the performance
derivative formula of this mean-variance combined metric with
respect to randomized parameters.

First, we define a special case of randomized policy by using the
concept of \emph{mixed policy} in MDPs and game theory
\citep{Feinberg02}. Consider any two deterministic policies $d$ and
$d'$ with $(\bm P, \bm r)$ and $(\bm P', \bm r')$, respectively. We
define a mixed policy $d^{\delta,d'}$ which adopts policy $d$ with
probability $1-\delta$ and adopts policy $d'$ with probability
$\delta$, $0 \leq \delta \leq 1$. Obviously, we have $d^{0,d'} = d$
and $d^{1,d'} = d'$. For simplicity, we denote the transition
probability matrix and the reward function under this mixed policy
as $\bm P^{\delta}$ and $\bm r^{\delta}$, respectively. We can
verify that
\begin{equation}\label{eq25}
\begin{array}{l}
\bm P^{\delta} = \bm P + \delta (\bm P' - \bm P), \\
\bm r^{\delta} = \bm r + \delta (\bm r' - \bm r). \\
\end{array}
\end{equation}
The steady state distribution and the mean reward of the system
under this mixed policy are denoted as $\bm \pi^{\delta}$ and
$J^{\delta}_{\mu}$, respectively. We have
\begin{equation}\label{eq26}
\begin{array}{l}
\bm \pi^{\delta} \bm P^{\delta} = \bm \pi^{\delta}, \\
J^{\delta}_{\mu} = \bm \pi^{\delta} \bm r^{\delta} = \bm
\pi^{\delta} [\bm r + \delta (\bm r' - \bm r)].
\end{array}
\end{equation}
With this mixed policy $d^{\delta,d'}$, the cost function of the
mean-variance combined metric can be written as
\begin{equation}
f^{\delta}(i) = (1-\delta)[r(i) - \beta (r(i) - J^{\delta}_{\mu})^2]
+ \delta[r'(i) - \beta (r'(i) - J^{\delta}_{\mu})^2], \quad \forall
i \in \mathcal S. \nonumber
\end{equation}
The vector of the cost function is denoted as $\bm f^{\delta}$. The
long-run average performance of the mean-variance combined metric
under this mixed policy $d^{\delta,d'}$ can be written as
\begin{equation}
J^{\delta}_{\mu,\sigma} = \sum_{i \in \mathcal S}
\pi^{\delta}(i)f^{\delta}(i) = \bm \pi^{\delta} \bm f^{\delta}.
\nonumber
\end{equation}

Similar to the derivation procedure of (\ref{eq_dif}),
left-multiplying $\bm \pi^{\delta}$ on both sides of
(\ref{eq_poisson2}), we derive the performance difference formula
between policies $d^{\delta,d'}$ and $d$ as follows
\begin{equation}\label{eq_30}
J^{\delta}_{\mu,\sigma} - J_{\mu,\sigma} = \bm \pi^{\delta}\Big\{
(\bm P^{\delta} - \bm P) \bm g_{\mu,\sigma} + (1-\delta)[\bm r -
\beta(\bm r - J^{\delta}_{\mu}\bm 1)^2_{\odot}] + \delta [\bm r' -
\beta(\bm r' - J^{\delta}_{\mu} \bm 1)^2_{\odot}] - \bm r +
\beta(\bm r - J_{\mu} \bm 1)^2_{\odot} \Big\}
\end{equation}
Substituting (\ref{eq26}) into (\ref{eq_30}), we derive
\begin{eqnarray}\label{eq_31}
J^{\delta}_{\mu,\sigma} - J_{\mu,\sigma} &=& \bm \pi^{\delta}\Big\{
(\bm P^{\delta} - \bm P) \bm g_{\mu,\sigma} + \delta [\bm r' -
\beta(\bm r' - J^{\delta}_{\mu} \bm 1)^2_{\odot}] -\delta[\bm r -
\beta(\bm r - J^{\delta}_{\mu} \bm 1)^2_{\odot}] \Big\} \nonumber\\
&& + \bm \pi^{\delta} \beta[ (\bm r - J_{\mu} \bm 1)^2_{\odot} -
(\bm r - J^{\delta}_{\mu} \bm 1)^2_{\odot} ] \nonumber\\
&=& \bm \pi^{\delta}\Big\{ (\bm P^{\delta} - \bm P) \bm
g_{\mu,\sigma} + \delta [\bm r' - \beta(\bm r' - J_{\mu} \bm
1)^2_{\odot}] -\delta[\bm r -
\beta(\bm r - J_{\mu} \bm 1)^2_{\odot}] \Big\} \nonumber\\
&& + \bm \pi^{\delta} \beta \delta [2\bm r' J^{\delta}_{\mu}-2\bm r
J^{\delta}_{\mu} - 2\bm r'J_{\mu} + 2\bm r J_{\mu}] + \bm
\pi^{\delta} \beta[ (\bm r - J_{\mu} \bm 1)^2_{\odot} -
(\bm r - J^{\delta}_{\mu} \bm 1)^2_{\odot} ] \nonumber\\
&=& \bm \pi^{\delta}\Big\{ (\bm P^{\delta} - \bm P) \bm
g_{\mu,\sigma} + \delta [\bm r' - \beta(\bm r' - J_{\mu} \bm
1)^2_{\odot}] -\delta[\bm r -
\beta(\bm r - J_{\mu} \bm 1)^2_{\odot}] \Big\} \nonumber\\
&& + 2\bm \pi^{\delta} \beta  [\bm r + \delta (\bm r' - \bm r)]
J^{\delta}_{\mu} - 2\bm \pi^{\delta} \beta  [\bm r + \delta (\bm r'
- \bm r)] J_{\mu} + \beta [J^2_{\mu} - (J^{\delta}_{\mu})^2] \nonumber\\
&=& \bm \pi^{\delta}\Big\{ (\bm P^{\delta} - \bm P) \bm
g_{\mu,\sigma} + \delta [\bm r' - \beta(\bm r' - J_{\mu} \bm
1)^2_{\odot}] -\delta[\bm r -
\beta(\bm r - J_{\mu} \bm 1)^2_{\odot}] \Big\} \nonumber\\
&& + \beta [J^2_{\mu} - 2J_{\mu}J^{\delta}_{\mu} + (J^{\delta}_{\mu})^2] \nonumber\\
&=& \bm \pi^{\delta}\Big\{ (\bm P^{\delta} - \bm P) \bm
g_{\mu,\sigma} + \delta [\bm r' - \beta(\bm r' - J_{\mu} \bm
1)^2_{\odot}] -\delta[\bm r -
\beta(\bm r - J_{\mu} \bm 1)^2_{\odot}] \Big\} \nonumber\\
&& + \beta (J^{\delta}_{\mu}  - J_{\mu})^2.
\end{eqnarray}
Substituting (\ref{eq25}) into (\ref{eq_31}), we derive the
mean-variance combined performance difference formula between mixed
policy $d^{\delta, d'}$ and deterministic policy $d$ as below
\begin{equation}\label{eq_dif3}
J^{\delta}_{\mu,\sigma} - J_{\mu,\sigma} = \delta \bm \pi^{\delta}
\Big[ (\bm P' - \bm P)\bm g_{\mu,\sigma} + \bm r' - \beta (\bm r' -
J_{\mu} \bm 1)^2_{\odot} - \bm r + \beta (\bm r - J_{\mu} \bm
1)^2_{\odot} \Big] + \beta (J^{\delta}_{\mu}-J_{\mu})^2.
\end{equation}

Comparing (\ref{eq_dif3}) with (\ref{eq_dif2}), we can see that the
first parts of the right-hand-side of these two difference formulas
have a linear factor $\delta$. When $\delta = 1$,  we can see that
(\ref{eq_dif3}) becomes (\ref{eq_dif2}). With (\ref{eq_dif3}), we
further derive the following lemma about the \emph{performance
derivative formula} of the mean-variance combined metrics in the
\emph{mixed policy space}.
\begin{lemma}\label{lemma2}
For an MDP with the current deterministic policy $d$ and associated
$(\bm P, \bm r)$, if we choose any new deterministic policy $d'$
with $(\bm P',\bm r')$, the derivative of the mean-variance combined
performance $J_{\mu,\sigma}$ with respect to the mixed probability
$\delta$ is quantified by
\begin{equation}\label{eq_dev}
\frac{\dif J_{\mu,\sigma}}{\dif \delta} = \bm \pi \Big[ (\bm P' -
\bm P)\bm g_{\mu,\sigma} + \bm r' - \beta (\bm r' - J_{\mu} \bm
1)^2_{\odot} - \bm r + \beta (\bm r - J_{\mu} \bm 1)^2_{\odot}
\Big].
\end{equation}
\end{lemma}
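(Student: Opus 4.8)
The plan is to read off (\ref{eq_dev}) directly from the difference formula (\ref{eq_dif3}) by dividing through by $\delta$ and letting $\delta \downarrow 0$. Abbreviate the bracketed column vector in (\ref{eq_dif3}) as
\[
\bm h := (\bm P' - \bm P)\bm g_{\mu,\sigma} + \bm r' - \beta (\bm r' - J_{\mu} \bm 1)^2_{\odot} - \bm r + \beta (\bm r - J_{\mu} \bm 1)^2_{\odot},
\]
and note that $\bm h$ is assembled entirely from the fixed pair $(\bm P,\bm r)$, $(\bm P',\bm r')$ and the quantities $J_\mu$, $\bm g_{\mu,\sigma}$ under the current policy $d$, so it does \emph{not} depend on $\delta$ (it is also insensitive to the free additive constant in $\bm g_{\mu,\sigma}$, since $(\bm P'-\bm P)\bm 1=\bm 0$). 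Then (\ref{eq_dif3}) says $J^{\delta}_{\mu,\sigma} - J_{\mu,\sigma} = \delta\,\bm\pi^{\delta}\bm h + \beta (J^{\delta}_{\mu}-J_{\mu})^2$, hence for $0<\delta\le 1$ the difference quotient equals $\bm\pi^{\delta}\bm h + \beta (J^{\delta}_{\mu}-J_{\mu})^2/\delta$, and it remains to take the limit of each of the two terms.

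For the first term I would show $\bm\pi^{\delta}\to\bm\pi$ as $\delta\to 0$. By (\ref{eq25}), $\bm P^{\delta} = (1-\delta)\bm P + \delta\bm P'$ is a convex combination of the two matrices $\bm P$ and $\bm P'$, which are ergodic by Assumption~\ref{assumption1}; for every $\delta\in[0,1]$ one has $p^{\delta}(i,j)\ge (1-\delta)p(i,j)$, so the transition graph of $\bm P^{\delta}$ contains that of $\bm P$ and $\bm P^{\delta}$ is again irreducible and aperiodic, i.e.\ ergodic. Therefore $\bm\pi^{\delta}$ is the unique solution of the linear system $\bm\pi^{\delta}(\bm I - \bm P^{\delta}) = \bm 0$, $\bm\pi^{\delta}\bm 1 = 1$, whose coefficients are affine in $\delta$; by Cramer's rule (standard perturbation of stationary distributions, cf.\ \citep{Cao07}) its components are rational functions of $\delta$ with denominator nonvanishing on $[0,1]$, so $\bm\pi^{\delta}$ is continuous — indeed real-analytic — in $\delta$, and $\bm\pi^{\delta}\bm h\to\bm\pi^{0}\bm h=\bm\pi\bm h$ as $\delta\to 0$ (recall $\bm\pi^{0}=\bm\pi$ since $d^{0,d'}=d$).

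For the second term I would show $J^{\delta}_{\mu}-J_{\mu}=O(\delta)$. By (\ref{eq26}), $J^{\delta}_{\mu}=\bm\pi^{\delta}[\bm r + \delta(\bm r'-\bm r)]$ is a product of factors that are smooth in $\delta$, hence differentiable, so $|J^{\delta}_{\mu}-J_{\mu}|=|J^{\delta}_{\mu}-J^{0}_{\mu}|=O(\delta)$ and $\beta (J^{\delta}_{\mu}-J_{\mu})^2/\delta = O(\delta)\to 0$. Combining the two limits gives $\lim_{\delta\downarrow 0}\bigl(J^{\delta}_{\mu,\sigma} - J_{\mu,\sigma}\bigr)/\delta = \bm\pi\bm h$, which is precisely (\ref{eq_dev}); since every quantity above extends smoothly across $\delta=0$, this one-sided limit is in fact the derivative $\dif J_{\mu,\sigma}/\dif\delta$ at the current policy $d$, as claimed.

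The only point requiring genuine care is the regularity of $\bm\pi^{\delta}$ — that is, checking that $\bm P^{\delta}$ stays ergodic on the whole interval $[0,1]$ so that $\bm\pi^{\delta}$ is well defined and depends continuously (smoothly) on $\delta$; everything else is a routine limit computation. The conceptual takeaway, which I would emphasize, is that the nonnegative square term $\beta(J^{\delta}_{\mu}-J_{\mu})^2$ that obstructed sufficiency in the difference formula is second order in $\delta$ and therefore vanishes under differentiation, leaving the clean gradient (\ref{eq_dev}). (One could instead differentiate $J^{\delta}_{\mu,\sigma}=\bm\pi^{\delta}\bm f^{\delta}$ by the product rule using $\dif\bm\pi^{\delta}/\dif\delta=\bm\pi(\bm P'-\bm P)\bm G$ for a deviation matrix $\bm G$, but routing through (\ref{eq_dif3}) avoids introducing $\bm G$ and the associated bookkeeping.)
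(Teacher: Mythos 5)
Your proposal is correct and follows essentially the same route as the paper's own proof: both differentiate the mixed-policy difference formula (\ref{eq_dif3}) at $\delta=0$, using $\bm\pi^{\delta}\to\bm\pi$ and the fact that the square term $\beta(J^{\delta}_{\mu}-J_{\mu})^2$ is second order in $\delta$ and hence contributes nothing to the derivative. The only difference is that you supply the regularity justifications (ergodicity of $\bm P^{\delta}$ on $[0,1]$ and smoothness of $\bm\pi^{\delta}$ via Cramer's rule) that the paper simply asserts as known facts.
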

\begin{proof}
With (\ref{eq_dif3}), taking the derivative operation with respect
to $\delta$ on both sides and letting $\delta \rightarrow 0$, we
directly obtain
\begin{eqnarray}
\frac{\dif J_{\mu,\sigma}}{\dif \delta} &=& \lim\limits_{\delta
\rightarrow 0}\left\{\bm \pi^{\delta} \Big[ (\bm P' - \bm P)\bm
g_{\mu,\sigma} + \bm r' - \beta (\bm r' - J_{\mu} \bm 1)^2_{\odot} -
\bm r + \beta (\bm r - J_{\mu} \bm 1)^2_{\odot} \Big] +
2\beta(J^{\delta}_{\mu} - J_{\mu})\frac{\dif J^{\delta}_{\mu} }{\dif
\delta}\right\} \nonumber\\
&=& \bm \pi \Big[ (\bm P' - \bm P)\bm g_{\mu,\sigma} + \bm r' -
\beta (\bm r' - J_{\mu} \bm 1)^2_{\odot} - \bm r + \beta (\bm r -
J_{\mu} \bm 1)^2_{\odot} \Big] + 2\beta\frac{\dif J_{\mu} }{\dif
\delta}\lim\limits_{\delta \rightarrow 0}\left\{J^{\delta}_{\mu} -
J_{\mu}\right\}\nonumber\\
&=& \bm \pi \Big[ (\bm P' - \bm P)\bm g_{\mu,\sigma} + \bm r' -
\beta (\bm r' - J_{\mu} \bm 1)^2_{\odot} - \bm r + \beta (\bm r -
J_{\mu} \bm 1)^2_{\odot} \Big], \nonumber
\end{eqnarray}
where we utilize the fact that $\lim\limits_{\delta \rightarrow
0}\bm \pi^{\delta} = \bm \pi$ and $\lim\limits_{\delta \rightarrow
0}(J^{\delta}_{\mu} - J_{\mu}) = 0$. The lemma is proved.
\end{proof}
Therefore, performance derivative formula (\ref{eq_dev}) can
directly quantify the performance gradient in the mixed policy space
along with a policy perturbation direction between any two
deterministic policies $d$ and $d'$.

Second, we focus on a more general form of randomized policies where
the policy is parameterized by $\bm \theta := (\theta_{i,a})$'s,
parameter $\theta_{i,a}$ indicates the probability of choosing
action $a \in \mathcal A$ at state $i \in \mathcal S$. We define the
randomized policy space as $\bm \Theta:=\{\mbox{all } \bm \theta |
\theta_{i,a}\geq 0, \sum_{a \in \mathcal A}\theta_{i,a}=1, \forall i
\in \mathcal S, \forall a \in \mathcal A\}$. Similarly, we consider
the performance difference of the mean-variance combined metrics
under two different randomized policies $\bm \theta$ and $\bm
\theta'$.

From the definition of $\theta_{i,a}$'s, we can derive that the
transition probabilities under $\bm \theta$ and $\bm \theta'$ are as
follows, respectively
\begin{equation}\label{eq_PP}
\begin{array}{l}
P^{\bm \theta}(i,j) := \sum_{a \in \mathcal A} p^a(i,j)
\theta_{i,a}, \quad i,j\in \mathcal S, \\
P^{\bm \theta'}(i,j) := \sum_{a \in \mathcal A} p^a(i,j)
\theta'_{i,a}, \quad i,j\in \mathcal S.
\end{array}
\end{equation}
The mean-variance combined cost function under policy $\bm \theta$
has the following definition
\begin{equation}\label{eq_ftheta1}
f^{\bm \theta}_{\mu,\sigma}(i) := \sum_{a\in\mathcal
A}\theta_{i,a}\left[ r(i,a) - \beta(r(i,a) - J^{\bm \theta}_{\mu})^2
\right], \quad i \in \mathcal S,
\end{equation}
where the mean performance $J^{\bm \theta}_{\mu}$ is defined as
\begin{equation}
J^{\bm \theta}_{\mu} := \sum_{i \in \mathcal S} \pi^{\bm
\theta}(i)\sum_{a \in \mathcal A} \theta_{i,a} r(i,a), \nonumber
\end{equation}
and $\pi^{\bm \theta}(i)$ is the steady state distribution at state
$i$ under policy $\bm \theta$. Similarly, we obtain the cost
function under policy $\bm \theta'$ as
\begin{equation}\label{eq_ftheta2}
f^{\bm \theta'}_{\mu,\sigma}(i) := \sum_{a\in\mathcal
A}\theta'_{i,a}\left[ r(i,a) - \beta(r(i,a) - J^{\bm
\theta'}_{\mu})^2 \right], \quad i \in \mathcal S,
\end{equation}
and
\begin{equation}
J^{\bm \theta'}_{\mu} := \sum_{i \in \mathcal S} \pi^{\bm
\theta'}(i)\sum_{a \in \mathcal A} \theta'_{i,a} r(i,a). \nonumber
\end{equation}
Similar to the derivation of the performance difference formula
(\ref{eq_dif}), we can also obtain the following difference formula
under two randomized policies $\bm \theta$ and $\bm \theta'$
\begin{equation}
J^{\bm \theta'}_{\mu,\sigma} - J^{\bm \theta}_{\mu,\sigma} = \sum_{i
\in \mathcal S}\pi^{\bm \theta'}(i)\left\{ f^{\bm
\theta'}_{\mu,\sigma}(i)  - f^{\bm \theta}_{\mu,\sigma}(i) + \sum_{j
\in \mathcal S}\big[P^{\bm \theta'}(i,j) - P^{\bm
\theta}(i,j)\big]g^{\bm \theta}_{\mu,\sigma}(j)  \right\},
\end{equation}
where $g^{\bm \theta}_{\mu,\sigma}(j)$ is the performance potential
defined in (\ref{eq_g2}) with $P^{\bm \theta}(i,j)$'s and $f^{\bm
\theta}_{\mu,\sigma}(i)$'s under policy $\bm \theta$. Substituting
(\ref{eq_PP}), (\ref{eq_ftheta1}), and (\ref{eq_ftheta2}) into the
above equation, we can further derive the following formula
\begin{eqnarray}\label{eq_dif4}
J^{\bm \theta'}_{\mu,\sigma} - J^{\bm \theta}_{\mu,\sigma} &=&
\sum_{i \in \mathcal S}\pi^{\bm \theta'}(i)\sum_{a \in \mathcal
A}(\theta'_{i,a}-\theta_{i,a})\left\{\sum_{j \in \mathcal
S}p^{a}(i,j)g^{\bm \theta}_{\mu,\sigma}(j) + r(i,a) - \beta r^2(i,a)
+ 2 \beta J^{\bm \theta}_{\mu}r(i,a) \right\} \nonumber\\
&& + \beta (J^{\bm \theta'}_{\mu} - J^{\bm \theta}_{\mu})^2.
\end{eqnarray}
Therefore, with (\ref{eq_dif4}), we directly derive the following
lemma about the mean-variance combined performance derivative with
respect to the randomized policy.
\begin{lemma}\label{lemma3}
For a randomized MDP with parameterized policy $\bm \theta$, where
$\theta_{i,a}$ indicates the probability of choosing action $a$ at
state $i$, the derivative of the mean-variance combined performance
with respect to the parameter $\bm \theta$ is quantified by
\begin{equation}\label{eq_devpara}
\frac{\dif J^{\bm \theta}_{\mu,\sigma}}{\dif \theta_{i,a}} =
\pi^{\bm \theta}(i) \left\{\sum_{j \in \mathcal S}p^{a}(i,j)g^{\bm
\theta}_{\mu,\sigma}(j) + r(i,a) - \beta r^2(i,a) + 2 \beta J^{\bm
\theta}_{\mu}r(i,a) \right\}, \quad i \in \mathcal S, a \in \mathcal
A.
\end{equation}
\end{lemma}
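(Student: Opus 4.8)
The plan is to reproduce, in the randomized policy space, the argument used to prove Lemma~\ref{lemma2}. The starting point is the difference formula (\ref{eq_dif4}), which is already established for arbitrary randomized policies $\bm\theta$ and $\bm\theta'$. First I would fix a state $i$ and an action $a$, and consider a one-parameter family of policies $\bm\theta(\epsilon)$ that agrees with $\bm\theta$ except that the weight on $(i,a)$ is increased to $\theta_{i,a}+\epsilon$ for small $\epsilon>0$ (so that, as is customary in the policy-gradient literature, the derivative $\dif J^{\bm\theta}_{\mu,\sigma}/\dif\theta_{i,a}$ is read as the directional derivative along the $(i,a)$-coordinate, with the remaining entries of the $i$-th row of $\bm\theta$ treated as free or suitably renormalized). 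Substituting $\bm\theta' = \bm\theta(\epsilon)$ into (\ref{eq_dif4}), only the $(i,a)$-term of the inner sum survives, so
\[
J^{\bm\theta(\epsilon)}_{\mu,\sigma} - J^{\bm\theta}_{\mu,\sigma} = \epsilon\,\pi^{\bm\theta(\epsilon)}(i)\left\{\sum_{j\in\mathcal S}p^{a}(i,j)g^{\bm\theta}_{\mu,\sigma}(j) + r(i,a) - \beta r^2(i,a) + 2\beta J^{\bm\theta}_{\mu}r(i,a)\right\} + \beta\big(J^{\bm\theta(\epsilon)}_{\mu} - J^{\bm\theta}_{\mu}\big)^2.
\]

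Next I would divide both sides by $\epsilon$ and let $\epsilon\to0$. Two standard facts make the limit transparent. First, under Assumption~\ref{assumption1} the transition matrix $\bm P^{\bm\theta}$ depends continuously (indeed smoothly) on $\bm\theta$ and the chain stays ergodic, so the stationary distribution $\bm\pi^{\bm\theta}$, being the unique solution of the linear system $\bm\pi^{\bm\theta}\bm P^{\bm\theta}=\bm\pi^{\bm\theta}$, $\bm\pi^{\bm\theta}\bm 1=1$, is itself continuous in $\bm\theta$; hence $\pi^{\bm\theta(\epsilon)}(i)\to\pi^{\bm\theta}(i)$ and the first term converges to the claimed expression. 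Second, $J^{\bm\theta(\epsilon)}_{\mu} - J^{\bm\theta}_{\mu} = O(\epsilon)$ by the same continuity, so
\[
\frac{1}{\epsilon}\,\beta\big(J^{\bm\theta(\epsilon)}_{\mu} - J^{\bm\theta}_{\mu}\big)^2 = \beta\big(J^{\bm\theta(\epsilon)}_{\mu} - J^{\bm\theta}_{\mu}\big)\cdot\frac{J^{\bm\theta(\epsilon)}_{\mu} - J^{\bm\theta}_{\mu}}{\epsilon} \longrightarrow \beta\cdot0\cdot\frac{\dif J^{\bm\theta}_{\mu}}{\dif\theta_{i,a}} = 0,
\]
i.e.\ the square term contributes nothing to the derivative. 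This is precisely the same mechanism by which the $\beta(J^{\delta}_{\mu}-J_{\mu})^2$ term dropped out in the proof of Lemma~\ref{lemma2}. Collecting the two limits yields (\ref{eq_devpara}).

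As for difficulty, the argument is essentially mechanical once (\ref{eq_dif4}) is available. The only points that require a little care are the differentiability of $\bm\pi^{\bm\theta}$ and $J^{\bm\theta}_{\mu}$ with respect to $\bm\theta$ — which follows from ergodicity via the implicit function theorem applied to the linear characterization of the stationary distribution — and the bookkeeping forced by the simplex constraint $\sum_{a\in\mathcal A}\theta_{i,a}=1$, i.e.\ making precise in which feasible directions the perturbation $\bm\theta(\epsilon)$ is taken. Neither is a genuine obstacle; both are handled exactly as in the proof of Lemma~\ref{lemma2}, so I would expect the written proof to be as short as the one given there.
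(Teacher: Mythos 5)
Your proposal is correct and follows essentially the same route as the paper: both start from the difference formula (\ref{eq_dif4}), perturb only the $(i,a)$-coordinate so that a single term of the double sum survives, and observe that the square term $\beta(J^{\bm\theta'}_{\mu}-J^{\bm\theta}_{\mu})^2$ is of second order and therefore contributes nothing to the derivative, while continuity of $\bm\pi^{\bm\theta}$ handles the first term. The paper phrases this as differentiating both sides and letting $\bm\theta'\to\bm\theta$ rather than forming an explicit difference quotient, and it is no more careful than you are about the simplex constraint, so your added remarks on feasible directions and smoothness of $\bm\pi^{\bm\theta}$ only make the argument more complete.
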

\begin{proof}
Taking the derivative operation with respect to $\theta_{i,a}$ on
both sides of (\ref{eq_dif4}) and letting $\bm \theta' \rightarrow
\bm \theta$, we have
\begin{eqnarray}
\frac{\dif J^{\bm \theta}_{\mu,\sigma}}{\dif \theta_{i,a}}
\hspace{-0.2cm}&=&\hspace{-0.2cm} \lim\limits_{\bm \theta'
\rightarrow \bm \theta} \pi^{\bm \theta'}(i) \left\{\sum_{j \in
\mathcal S}p^{a}(i,j)g^{\bm \theta}_{\mu,\sigma}(j) + r(i,a) - \beta
r^2(i,a) + 2 \beta J^{\bm \theta}_{\mu}r(i,a) \right\} + 2\beta
\frac{\dif J^{\bm \theta}_{\mu}}{\dif \theta_{i,a}} \lim\limits_{\bm
\theta' \rightarrow \bm \theta} \left\{J^{\bm \theta'}_{\mu} -
J^{\bm
\theta}_{\mu}\right\} \nonumber\\
&=& \pi^{\bm \theta}(i) \left\{\sum_{j \in \mathcal
S}p^{a}(i,j)g^{\bm \theta}_{\mu,\sigma}(j) + r(i,a) - \beta r^2(i,a)
+ 2 \beta J^{\bm \theta}_{\mu}r(i,a) \right\}, \nonumber
\end{eqnarray}
where we utilize the fact that $\lim\limits_{\bm \theta' \rightarrow
\bm \theta} \pi^{\bm \theta'}(i) = \pi^{\bm \theta}(i)$ and
$\lim\limits_{\bm \theta' \rightarrow \bm \theta} \left\{J^{\bm
\theta'}_{\mu} - J^{\bm \theta}_{\mu}\right\} = 0$. The lemma is
proved.
\end{proof}

Based on (\ref{eq_devpara}), we can further develop policy gradient
algorithms to find the optimal $\bm \theta^*$. With
Lemma~\ref{lemma3}, we derive the following theorem about the
optimality of deterministic policies.

\begin{theorem}\label{theorem_deterministic}
For the MDP optimization problem of mean-variance combined metrics
defined in (\ref{eq_prob}), a deterministic policy can achieve the
optimal value.
\end{theorem}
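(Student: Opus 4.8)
The plan is to prove the stronger fact that for \emph{every} randomized policy $\bm\theta\in\bm\Theta$ there exists a deterministic policy $d'\in\mathcal D$ with $J^{d'}_{\mu,\sigma}\geq J^{\bm\theta}_{\mu,\sigma}$. Granting this, since $\mathcal D$ is finite the quantity $\max_{d\in\mathcal D}J^{d}_{\mu,\sigma}$ is attained, and it is both $\geq \sup_{\bm\theta\in\bm\Theta}J^{\bm\theta}_{\mu,\sigma}$ (by the stronger fact) and $\leq$ it (because $\mathcal D\subseteq\bm\Theta$); hence the optimal value over all randomized policies equals $\max_{d\in\mathcal D}J^{d}_{\mu,\sigma}$ and is realized by a deterministic policy. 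The whole argument rests on the randomized-policy difference formula (\ref{eq_dif4}) and on $\pi^{d'}(i)>0$, which holds by Assumption~\ref{assumption1} since $d'$ is deterministic.

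First I would fix $\bm\theta\in\bm\Theta$, solve its Poisson equation for $\bm g^{\bm\theta}_{\mu,\sigma}$, compute $J^{\bm\theta}_{\mu}$, and write $q^{\bm\theta}(i,a):=\sum_{j\in\mathcal S}p^{a}(i,j)g^{\bm\theta}_{\mu,\sigma}(j)+r(i,a)-\beta r^2(i,a)+2\beta J^{\bm\theta}_{\mu}r(i,a)$ for the bracketed term of (\ref{eq_dif4}) (equivalently, up to the $a$-independent constant $\beta(J^{\bm\theta}_{\mu})^2$, the ``variance-adjusted reward plus one-step potential'' $\sum_j p^a(i,j)g^{\bm\theta}_{\mu,\sigma}(j)+r(i,a)-\beta(r(i,a)-J^{\bm\theta}_{\mu})^2$). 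Then define $d'$ by the greedy rule $d'(i)\in\argmax_{a\in\mathcal A}q^{\bm\theta}(i,a)$ for each $i$, and identify $d'$ with the vertex $\bm\theta'$ of $\bm\Theta$ given by $\theta'_{i,d'(i)}=1$. For each $i$ one has $\sum_{a}(\theta'_{i,a}-\theta_{i,a})q^{\bm\theta}(i,a)=q^{\bm\theta}(i,d'(i))-\sum_{a}\theta_{i,a}q^{\bm\theta}(i,a)\geq 0$, because $\max_a q^{\bm\theta}(i,a)$ dominates the convex combination $\sum_a\theta_{i,a}q^{\bm\theta}(i,a)$. Multiplying by $\pi^{\bm\theta'}(i)=\pi^{d'}(i)>0$ and summing over $i$ makes the first term on the right of (\ref{eq_dif4}) nonnegative, while the second term $\beta(J^{\bm\theta'}_{\mu}-J^{\bm\theta}_{\mu})^2$ is nonnegative as well; hence $J^{d'}_{\mu,\sigma}\geq J^{\bm\theta}_{\mu,\sigma}$, which is the stronger fact.

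The step I expect to be the real obstacle is conceptual rather than computational: $J^{\bm\theta}_{\mu,\sigma}$ is \emph{not} an affine function of the row $(\theta_{i,a})_a$ once the other states are frozen, since $\beta(r(i,a)-J^{\bm\theta}_{\mu})^2$ depends on $\bm\theta$ through $J^{\bm\theta}_{\mu}$, so the classical ``a linear program over the policy polytope has a vertex optimum'' reasoning does not apply directly, and a naive coordinate-wise derivative argument from Lemma~\ref{lemma3} would only yield a necessary (not sufficient) condition, exactly the phenomenon recorded after (\ref{eq_nec}). What makes the proof go through is that in (\ref{eq_dif4}) the entire nonlinearity is quarantined inside the single term $\beta(J^{\bm\theta'}_{\mu}-J^{\bm\theta}_{\mu})^2\geq 0$, whose sign is always favorable, so the greedy vertex $d'$ can only improve the performance. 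The remaining items, namely existence of $\bm g^{\bm\theta}_{\mu,\sigma}$ and $\pi^{\bm\theta}$, positivity of $\pi^{d'}$, and finiteness of $\mathcal D$, are routine given Assumption~\ref{assumption1}.
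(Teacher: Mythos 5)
Your proposal is correct, and it takes a genuinely different route from the paper. The paper's proof works at the optimum: it invokes the necessary condition of Theorem~\ref{theorem2} to argue that an optimal randomized policy $\bm\theta^*$ must, state by state, maximize the linear objective in (\ref{eq_lp}) over the probability simplex, and then appeals to the vertex property of linear programs to conclude that the maximizer can be taken deterministic. You instead prove a universal domination statement: for \emph{every} $\bm\theta\in\bm\Theta$, the greedy deterministic policy $d'(i)\in\argmax_a q^{\bm\theta}(i,a)$ satisfies $J^{d'}_{\mu,\sigma}\geq J^{\bm\theta}_{\mu,\sigma}$, because in (\ref{eq_dif4}) the max dominates the convex combination $\sum_a\theta_{i,a}q^{\bm\theta}(i,a)$ at each state while the nonlinearity is confined to the nonnegative term $\beta(J^{\bm\theta'}_{\mu}-J^{\bm\theta}_{\mu})^2$. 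Both arguments hinge on exactly the same structural fact — the square term in the difference formula has a favorable sign — but yours buys two things: it does not presuppose that a randomized optimum exists (you recover attainment from finiteness of $\mathcal D$), and it makes explicit the improvement step that the paper's LP argument leaves implicit, namely that replacing $\bm\theta^*_i$ by a vertex maximizer of the LP cannot decrease $J_{\mu,\sigma}$ even though $\pi$, $J_\mu$ and $g$ all change. Your diagnosis of why the naive "linear program over the policy polytope" reasoning fails (the objective is not affine in $\bm\theta_i$ because of $J^{\bm\theta}_\mu$) and why (\ref{eq_dif4}) rescues it is exactly right. The only point worth making explicit is that (\ref{eq_dif4}) requires $\pi^{\bm\theta}$ and $g^{\bm\theta}_{\mu,\sigma}$ to exist for the randomized base policy $\bm\theta$, which follows since the support graph of $P^{\bm\theta}$ contains that of some deterministic selection, so ergodicity under Assumption~\ref{assumption1} carries over; the paper makes the same implicit use.
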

\begin{proof}
We choose any randomized policy defined by parameters
$\theta_{i,a}$, $i \in \mathcal S$, $a \in \mathcal A$. The
associated transition probability and cost function are defined by
(\ref{eq_PP}) and (\ref{eq_ftheta1}), respectively. With
(\ref{eq_dif4}) and the necessary condition in
Theorem~\ref{theorem2}, we directly derive the following result. For
any $i \in \mathcal S$, if $\bm \theta^*_i :=
(\theta^*_{i,a_1},\theta^*_{i,a_2},\cdots,\theta^*_{i,a_A})$ is
optimal, it must satisfy the following conditions
\begin{equation}\label{eq_lp}
\begin{array}{ll}
\bm \theta^*_i = & \argmax\limits_{\bm \theta_i} \sum_{a \in
\mathcal A}\theta_{i,a} \left\{\sum_{j \in \mathcal
S}p^{a}(i,j)g^{\bm \theta^*}_{\mu,\sigma}(j) + r(i,a) - \beta
r^2(i,a) + 2 \beta J^{\bm
\theta^*}_{\mu}r(i,a) \right\}, \\
\mbox{s.t.,} & \sum_{a \in \mathcal A} \theta_{i,a} = 1,\\
\quad & \theta_{i,a} \geq 0, \quad \forall a \in \mathcal A.
\end{array}
\end{equation}
In the above problem, the values in the braces are given and the
$\theta_{i,a}$'s are optimization variables. Obviously,
(\ref{eq_lp}) is a linear program. With the well known results of
linear programming, an optimal solution $\theta^*_{i,a}$ can be
found on the vertexes of the multidimensional polyhedron composed by
the value domain of $\theta_{i,a}$'s \citep{Chong13}. We further
look at the constraints defined in (\ref{eq_lp}). It is easy to find
that the feasible domain of $\theta_{i,a}$'s is $[0, 1]$. Therefore,
the vertexes are either 0 or 1, so are the optimal solution
$\theta^*_{i,a}$'s. This indicates that the optimal policy can be
deterministic and the theorem is proved.
\end{proof}

\noindent\textbf{Remark~3.} For many risk-aware MDPs, a
deterministic policy cannot achieve the optimal value
\citep{Chung94}. This is partly because that those risk-aware MDPs
have a constrained form. It is well known that constrained MDPs may
not achieve optimum at deterministic policies \citep{Altman99}.
However, our optimization problem defined in (\ref{eq_prob}) is not
of a constrained form, which partly supports the result in
Theorem~\ref{theorem_deterministic}.

With the closed-form solution of gradients represented by
(\ref{eq_dev}) or (\ref{eq_devpara}), we can further develop policy
gradient-based algorithms to solve our optimization problem
(\ref{eq_prob}). Gradient-based algorithms are widely adopted in the
community of reinforcement learning \citep{Prashanth13,Tamar12}. It
is worth noting that Lemma~\ref{lemma3} is about a special case of
parameterized policy $\bm \theta$, where $\theta_{i,a}$ indicates
the probability of choosing action $a \in \mathcal A$ at state $i
\in \mathcal S$. Thus, the dimension of such parameterized policy is
$|\bm \theta| = |\mathcal S||\mathcal A| = SA$. In general, the
space of parameters $\bm \theta$ can have a much lower dimension
than the original policy space mapping from $\mathcal S$ to
$\mathcal A$. For example, $\bm \theta$ can be the weights of kernel
functions or the parameters of a neural network, such as the actor
network (or policy network) widely used in deep reinforcement
learning. We can also develop similar gradients like
(\ref{eq_devpara}). However, such gradient-based method usually
suffers from intrinsic deficiencies, such as being trapped into a
local optimum, difficulty of selecting learning step sizes. In the
next section, we will further develop a policy iteration type
algorithm to solve (\ref{eq_prob}), which usually has a fast
convergence speed in practice.

\section{Optimization Algorithm}\label{section_algo}
In this section, we develop an iterative algorithm to solve the
optimization problem (\ref{eq_prob}) based on the performance
difference formula (\ref{eq_dif2}). This algorithm is of a policy
iteration type. We also prove that the algorithm can converge to a
local optimum in both the mixed and the randomized policy spaces.

The performance difference formula (\ref{eq_dif2}) and
Theorem~\ref{theorem1} directly indicate an approach to generate
improved policies. Therefore, we can develop an iterative procedure
to optimize the system performance of mean-variance combined
metrics, which is stated in Algorithm~\ref{algo1}.

\begin{algorithm}[htbp]
  \caption{An iterative algorithm to find the optimal mean-variance combined metric.}\label{algo1}
  \begin{algorithmic}[1]

\State arbitrarily choose an initial policy $d^{(0)} \in \mathcal D$
and set $l=0$;

\Repeat

\State for the current policy $d^{(l)}$, compute or estimate the
values of $J_{\mu}$, $J_{\mu,\sigma}$, and $\bm g_{\mu,\sigma}$
based on their definitions (\ref{eq_J}), (\ref{eq_Jmuvar}), and
(\ref{eq_g}), respectively;

\State generate a new policy $d^{(l+1)}$ as follows:
\begin{equation}\label{eq_PI}
d^{(l+1)}(i) := \argmax\limits_{a \in \mathcal A}\Bigg\{ r(i,a) -
\beta[r(i,a)-J_{\mu}]^2 + \sum_{j \in \mathcal S}p^a(i,j)
g_{\mu,\sigma}(j) \Bigg\}
\end{equation}
for all $i \in \mathcal S$. While breaking ties to avoid policy
oscillations, we ensure $d^{(l+1)}(i) = d^{(l)}(i)$ if possible (if
$d^{(l)}(i)$ can already achieve $\max$ in (\ref{eq_PI}));

\State set $l := l+1$;

\Until{$d^{(l)} = d^{(l-1)}$}

\Return $d^{(l)}$.

\end{algorithmic}
\end{algorithm}

As we discussed at the end of Section~\ref{section_model}, because
the cost function (\ref{eq_f}) is not additive, our optimization
problem (\ref{eq_prob}) is not a standard MDP problem and the
dynamic programming is not applicable. Algorithm~\ref{algo1} treats
the original problem (\ref{eq_prob}) as if it is a standard MDP with
new cost function $r(i,a)-\beta[r(i,a)-J_{\mu}]^2$, where $J_{\mu}$
is a constant at the current iteration and will be updated at next
iterations. With Theorem~\ref{theorem1}, we can see that the new
policy generated by (\ref{eq_PI}) is better than the current policy.
Therefore, the policy will be improved continually in
Algorithm~\ref{algo1}. We can also observe that
Algorithm~~\ref{algo1} is of a policy iteration type. However, the
global convergence of the traditional policy iteration cannot be
directly extended to Algorithm~\ref{algo1} since our problem
(\ref{eq_prob}) is not a standard MDP. Below, we first give
definitions of local optimum in the mixed policy space and the
randomized policy space, respectively. Then, we study the
convergence property of Algorithm~\ref{algo1}.
\begin{definition}\label{def1}
For a policy $d \in \mathcal D$, if there exists $\Delta \in (0,1)$,
we always have $J^{d}_{\mu,\sigma} \geq
J^{d^{\delta,d'}}_{\mu,\sigma}$ for any $d' \in \mathcal D$ and
$\delta \in (0,\Delta)$ , then we say $d$ is a local optimal policy
in the mixed policy space.
\end{definition}
If we extend our policy space to the randomized policy space with
parameters $\bm \theta$, we can also have the following definition
of local optimal policy.
\begin{definition}\label{def2}
For a randomized policy $\bm \theta \in \bm \Theta$, if there exists
$\Delta > 0$, we always have $J^{\bm \theta}_{\mu,\sigma} \geq
J^{\bm \theta'}_{\mu,\sigma}$ for any $\bm \theta' \in \bm \Theta$
and $||\bm \theta - \bm \theta'|| < \Delta$ where $||\cdot||$ can be
an Euclidean distance, then we say $\bm \theta$ is a local optimal
policy in the randomized policy space.
\end{definition}

Since $\mathcal D$ has a finite set of policies, we need the
randomization to make the space continuous such that a local optimum
can be defined. Definition~\ref{def1} indicates that if $d$ is a
local optimum, then it is not worse than any mixed policy in its
small enough neighborhood, along with any perturbation direction
from $d$ to $d' \in \mathcal D$. In other words, if $d$ is a local
optimum in the mixed policy space, then we always have $\frac{\dif
J^d_{\mu,\sigma}}{\dif \delta} \leq 0$ along with any perturbation
direction. Definition~\ref{def2} is more natural since it is defined
in the fully randomized policy space, which is continuous.

With Definitions~\ref{def1} \& \ref{def2} , we further derive the
following theorem about the local optimal convergence of
Algorithm~\ref{algo1}.
\begin{theorem}\label{theorem3}
Algorithm~\ref{algo1} converges to a local optimum, both in the
mixed policy space and the randomized policy space.
\end{theorem}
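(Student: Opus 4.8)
The plan is to show that Algorithm~\ref{algo1} terminates (it must, since $\mathcal D$ is finite and every iteration strictly improves $J_{\mu,\sigma}$ by Theorem~\ref{theorem1}, except the last), and that the policy $d$ at which it stops is a local optimum in both senses of Definitions~\ref{def1} and~\ref{def2}. First I would establish termination: by Theorem~\ref{theorem1}, if $d^{(l+1)}\neq d^{(l)}$ then the inequality in~(\ref{eq_newp}) holds for all $i$ with strict inequality for at least one state, so $J^{d^{(l+1)}}_{\mu,\sigma}>J^{d^{(l)}}_{\mu,\sigma}$; since there are only finitely many policies, the sequence $\{J^{d^{(l)}}_{\mu,\sigma}\}$ is strictly increasing and the algorithm must stop after finitely many steps with $d^{(l)}=d^{(l-1)}=:d$. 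The stopping condition then says that for every state $i$, the action $d(i)$ already attains the maximum in~(\ref{eq_PI}); i.e., for all $a\in\mathcal A$,
\begin{equation}\label{eq_stop}
\sum_{j\in\mathcal S}p^a(i,j)g_{\mu,\sigma}(j)+r(i,a)-\beta(r(i,a)-J_{\mu})^2 \;\le\; \sum_{j\in\mathcal S}p^{d(i)}(i,j)g_{\mu,\sigma}(j)+r(i,d(i))-\beta(r(i,d(i))-J_{\mu})^2,
\end{equation}
where $J_\mu=J^d_\mu$ and $\bm g_{\mu,\sigma}=\bm g^d_{\mu,\sigma}$.

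Next I would prove local optimality in the mixed policy space (Definition~\ref{def1}). Fix any $d'\in\mathcal D$. Condition~(\ref{eq_stop}) applied at $a=d'(i)$ for each $i$ says exactly that every component of the column vector $(\bm P'-\bm P)\bm g_{\mu,\sigma}+\bm r'-\beta(\bm r'-J_\mu\bm 1)^2_\odot-\bm r+\beta(\bm r-J_\mu\bm 1)^2_\odot$ is $\le 0$. Since $\bm\pi^\delta>0$ for all $\delta\in[0,1]$ by ergodicity (Assumption~\ref{assumption1}), the performance difference formula~(\ref{eq_dif3}) gives
\begin{equation}
J^{\delta}_{\mu,\sigma}-J_{\mu,\sigma} \;\le\; 0 + \beta(J^{\delta}_\mu-J_\mu)^2.
\end{equation}
This is the crux: the first term is $\le 0$, but the square term $\beta(J^\delta_\mu-J_\mu)^2$ is $\ge 0$, so I cannot immediately conclude. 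The resolution is an order-of-magnitude argument: the first (bracketed) term, being $\le 0$ with a genuinely negative factor $\delta\bm\pi^\delta[\cdots]$, is $O(\delta)$, whereas $J^\delta_\mu-J_\mu = O(\delta)$ (since $\bm\pi^\delta$ and $\bm r^\delta$ depend continuously, indeed smoothly, on $\delta$ with $\frac{\dif J_\mu}{\dif\delta}$ finite), so the square term is $O(\delta^2)$. Hence for $\delta$ small enough the $O(\delta^2)$ term cannot overcome the nonpositive $O(\delta)$ term — more carefully, I would argue via the derivative: by Lemma~\ref{lemma2}, $\frac{\dif J_{\mu,\sigma}}{\dif\delta}\big|_{\delta=0}=\bm\pi[\cdots]\le 0$. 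If this derivative is strictly negative, then $J^\delta_{\mu,\sigma}<J_{\mu,\sigma}$ for all small $\delta>0$ and we are done. If it equals zero, I need the stronger statement that $J^\delta_{\mu,\sigma}-J_{\mu,\sigma}\le 0$ for small $\delta$; here I would use that $\delta\bm\pi^\delta[\cdots]\le 0$ for \emph{every} $\delta\in(0,\Delta)$ (not just in the limit), which holds because~(\ref{eq_stop}) makes the bracket nonpositive componentwise and $\bm\pi^\delta>0$ — so in~(\ref{eq_dif3}) the first term is $\le 0$ for all $\delta$, and I only need to dispose of the square term. For that I invoke $|J^\delta_\mu-J_\mu|\le L\delta$ for some constant $L$ (Lipschitz continuity of $\delta\mapsto J^\delta_\mu$, standard from the matrix-inverse formula for $\bm\pi^\delta$), while the nonpositive first term, if the bracket has a strictly negative entry at some $i$, is $\le -\delta\,\pi^\delta(i)\,c < -\delta c'$ for constants $c,c'>0$; balancing $-c'\delta$ against $\beta L^2\delta^2$ gives $J^\delta_{\mu,\sigma}-J_{\mu,\sigma}<0$ for $\delta<c'/(\beta L^2)$. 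The one remaining subcase is when the bracket is identically zero for this $d'$; then~(\ref{eq_dif3}) reads $J^\delta_{\mu,\sigma}-J_{\mu,\sigma}=\beta(J^\delta_\mu-J_\mu)^2\ge 0$, which would seem to \emph{violate} local optimality — but in that case the bracket being zero forces, by~(\ref{eq18})-type algebra, $J^\delta_\mu=J_\mu$ as well (the perturbation changes neither the potential-weighted transition term nor the reward term in a way that moves the mean), so the square term vanishes and $J^\delta_{\mu,\sigma}=J_{\mu,\sigma}$, consistent with "$\ge$" in Definition~\ref{def1}. I would spell this subcase out carefully since it is the subtle one.

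Finally, for the randomized policy space (Definition~\ref{def2}), I would argue analogously using the difference formula~(\ref{eq_dif4}) and the gradient~(\ref{eq_devpara}) from Lemma~\ref{lemma3}. Note that~(\ref{eq_stop}) can be rewritten, using $-\beta(r(i,a)-J_\mu)^2 = -\beta r^2(i,a)+2\beta J_\mu r(i,a)-\beta J_\mu^2$ and absorbing the state-independent constant $-\beta J_\mu^2$, as: for every $i$ and every $a$,
\begin{equation}
\sum_{j}p^a(i,j)g_{\mu,\sigma}(j)+r(i,a)-\beta r^2(i,a)+2\beta J_\mu r(i,a) \;\le\; \sum_{j}p^{d(i)}(i,j)g_{\mu,\sigma}(j)+r(i,d(i))-\beta r^2(i,d(i))+2\beta J_\mu r(i,d(i)),
\end{equation}
i.e., $\frac{\dif J^{\bm\theta}_{\mu,\sigma}}{\dif\theta_{i,a}}/\pi^{\bm\theta}(i)$ evaluated at the deterministic $\bm\theta$ corresponding to $d$ is maximized over $a$ at $a=d(i)$. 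Viewing $d$ as the vertex $\bm\theta$ of the polytope $\bm\Theta$ with $\theta_{i,d(i)}=1$, any feasible perturbation direction $\bm\theta'-\bm\theta$ has $\sum_a(\theta'_{i,a}-\theta_{i,a})=0$ with $\theta'_{i,a}-\theta_{i,a}\ge 0$ for $a\neq d(i)$; pairing against the above inequality shows the directional derivative $\sum_{i,a}(\theta'_{i,a}-\theta_{i,a})\frac{\dif J^{\bm\theta}_{\mu,\sigma}}{\dif\theta_{i,a}}\le 0$, and the same $O(\|\bm\theta'-\bm\theta\|)$ vs. $O(\|\bm\theta'-\bm\theta\|^2)$ bookkeeping on the square term $\beta(J^{\bm\theta'}_\mu-J^{\bm\theta}_\mu)^2$ in~(\ref{eq_dif4}) — with the same degenerate subcase handled as above — yields $J^{\bm\theta'}_{\mu,\sigma}\le J^{\bm\theta}_{\mu,\sigma}$ for $\bm\theta'$ near $\bm\theta$. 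The main obstacle throughout is exactly the nonnegative square term $\beta(J'_\mu-J_\mu)^2$: it is harmless at first order but I must rule out its dominating the (possibly zero) first-order term, which is why the Lipschitz/order-of-magnitude estimate and the careful treatment of the degenerate "bracket $\equiv 0$" case are the load-bearing parts of the argument.
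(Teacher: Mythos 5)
Your proof follows the same route as the paper's: termination via finiteness of $\mathcal D$ plus the strict improvement guaranteed by Theorem~\ref{theorem1}, then reading the stopping condition of (\ref{eq_PI}) as componentwise nonpositivity of the bracketed vector, and feeding that into (\ref{eq_dif3})/(\ref{eq_dev}) for the mixed policy space and into (\ref{eq_dif4})/(\ref{eq_devpara}) for the randomized policy space. In fact you are more careful than the paper on the central subtlety: the paper concludes local optimality directly from $\frac{\dif J_{\mu,\sigma}}{\dif \delta}\leq 0$ in every direction together with ``the first order of Taylor expansion,'' without discussing what happens when that derivative vanishes, whereas you correctly observe that the first term of (\ref{eq_dif3}) is nonpositive for \emph{every} $\delta$ (not merely to first order) and that the troublesome square term is $O(\delta^2)$, so a strictly negative bracket component dominates for small $\delta$.

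The one place your argument has a genuine hole is the degenerate subcase you yourself flag: a direction $d'$ for which the bracket is identically zero. You assert that ``(\ref{eq18})-type algebra'' then forces $J^{\delta}_{\mu}=J_{\mu}$, but this does not follow. The bracket is a comparison of Q-values built from the \emph{combined} potential $\bm g_{\mu,\sigma}=\bm g_{\mu}-\beta\bm g_{\sigma}$; a componentwise tie there is one scalar equation per state mixing the mean and variance potentials, and it does not pin down the mean-only quantity $(\bm P'-\bm P)\bm g_{\mu}+\bm r'-\bm r$, which is what controls $J^{\delta}_{\mu}-J_{\mu}$. If such a direction existed with $J'_{\mu}\neq J_{\mu}$, then (\ref{eq_dif3}) would give $J^{\delta}_{\mu,\sigma}-J_{\mu,\sigma}=\beta(J^{\delta}_{\mu}-J_{\mu})^2>0$ for small $\delta>0$, contradicting Definition~\ref{def1}; so this case must either be excluded by a separate argument or shown to force $J^{\delta}_{\mu}=J_{\mu}$ by an actual computation, neither of which your write-up supplies. (The paper's own proof does not treat this case either --- it is silently absorbed into the first-order Taylor step --- so your version at least localizes where the difficulty sits, but as written it does not close it, and the same unproved assertion is needed again in your randomized-policy argument.)
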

\begin{proof}
First, we prove the convergence of Algorithm~\ref{algo1}. From the
policy improvement step in (\ref{eq_PI}), we can see that the newly
generated policy $d^{(l+1)}$ is not worse than $d^{(l)}$, based on
the result of Theorem~\ref{theorem1}. More specifically, if
$d^{(l+1)} \neq d^{(l)}$, we can see that for at least one state
$i$, the $\max$ operator in (\ref{eq_PI}) is achieved and its value
in the braces of (\ref{eq_PI}) is strictly larger than the
associated value with the current action $d^{(l)}(i)$. Therefore,
with the last part of Theorem~\ref{theorem1}, we have
$J^{d^{(l+1)}}_{\mu,\sigma}
> J^{d^{(l)}}_{\mu,\sigma}$ and the newly generated policy is
strictly improved. Since the policy space $\mathcal D$ is finite,
Algorithm~\ref{algo1} will stop after a finite number of iterations.
Thus, the convergence of Algorithm~\ref{algo1} is proved.

Second, we prove that the convergence point is a local optimum of
the mixed policy space in Definition~\ref{def1}. From
Algorithm~\ref{algo1}, we can see that when the algorithm stops we
cannot find a new different policy generated by (\ref{eq_PI}). In
other words, any other policy with $(\bm P', \bm r')$ cannot make
the term in the braces of (\ref{eq_PI}) strictly bigger than that of
the current policy with $(\bm P, \bm r)$. That is, when
Algorithm~\ref{algo1} stops, we have
\begin{equation}\label{eq46}
r(i) - \beta[r(i)-J_{\mu}]^2 + \sum_{j \in \mathcal S}p(i,j)
g_{\mu,\sigma}(j) \geq r'(i) - \beta[r'(i)-J_{\mu}]^2 + \sum_{j \in
\mathcal S}p'(i,j) g_{\mu,\sigma}(j), \quad \forall i \in \mathcal
S,
\end{equation}
for any policy with $(\bm P', \bm r')$. Substituting the above
inequality into the derivative formula (\ref{eq_dev}) and using the
fact that the elements of $\bm \pi$ are always positive, we have
\begin{equation}
\frac{\dif J_{\mu,\sigma}}{\dif \delta} \leq 0, \nonumber
\end{equation}
along with any policy perturbation direction in the mixed policy
space. Therefore, with Definition~\ref{def1} and the first order of
Taylor expansion of $J_{\mu,\sigma}$ with respect to $\delta$, we
can see that the policy with $(\bm P, \bm r)$ converged to is a
local maximum in the mixed policy space.

Third, we further prove that the convergence point is also a local
optimum of the randomized policy space in Definition~\ref{def2}.
Suppose Algorithm~\ref{algo1} stops at policy $d$ with $(\bm P, \bm
r)$, we have (\ref{eq46}) and rewrite it as
\begin{equation}
r(i) - \beta r^2(i) + 2\beta J_{\mu} r(i) + \sum_{j \in \mathcal
S}p(i,j) g_{\mu,\sigma}(j) \geq r'(i) - \beta r'^2(i) + 2\beta
J_{\mu}r'(i) + \sum_{j \in \mathcal S}p'(i,j) g_{\mu,\sigma}(j),
\nonumber
\end{equation}
for any $(\bm P', \bm r')$ and $i \in \mathcal S$. In other words,
at each state $i$, the current policy $d$ always has the maximal
value of $r(i,a) - \beta r^2(i,a) + 2\beta J_{\mu} r(i,a) + \sum_{j
\in \mathcal S}p^a(i,j) g_{\mu,\sigma}(j)$ over all actions $a \in
\mathcal A$, i.e.,
\begin{equation}
d(i) = \argmax\limits_{a \in \mathcal A}\left\{ r(i,a) - \beta
r^2(i,a) + 2\beta J_{\mu} r(i,a) + \sum_{j \in \mathcal S}p^a(i,j)
g_{\mu,\sigma}(j) \right\}, \quad  i \in \mathcal S. \nonumber
\end{equation}
Substituting the above equation into the policy gradient
(\ref{eq_devpara}) in Lemma~\ref{lemma3}, we can see that the
converged policy $d$ (also denoted by $\bm \theta$ in the randomized
policy space) has
\begin{equation}\label{eq50}
\frac{\dif J^{\bm \theta}_{\mu,\sigma}}{\dif \theta_{i,a}} \geq
\frac{\dif J^{\bm \theta}_{\mu,\sigma}}{\dif \theta_{i,a'}}, \qquad
a=d(i), \ \forall a' \in \mathcal A.
\end{equation}
That is, at each state $i$, the converged policy $d$ always has the
maximal gradient at $\theta_{i,a}$ than other gradients at
$\theta_{i,a'}$, $a' \in \mathcal A$. Since the current policy $d$
is deterministic and it has $\theta_{i,a} = 1$ for $a = d(i)$ and
$\theta_{i,a'} = 0$ for other $a' \in \mathcal A$, we perturb in a
small neighborhood of $\bm \theta$. More specifically,
$\theta_{i,a}$ is perturbed from $1$ to $1 - \Delta$,
$\theta_{i,a'}$'s are perturbed from $0$ to $\Delta^{a'}$, and it
must satisfy $\sum_{a' \neq a} \Delta^{a'} = \Delta$. We have
\begin{eqnarray}
J^{\bm \theta}_{\mu,\sigma} - J^{\bm \theta+\Delta}_{\mu,\sigma} &=&
\frac{\dif J^{\bm \theta}_{\mu,\sigma}}{\dif \theta_{i,a}} \Delta -
\sum_{a' \in \mathcal A, a' \neq a} \frac{\dif J^{\bm
\theta}_{\mu,\sigma}}{\dif \theta_{i,a'}}
\Delta^{a'} + o(\Delta) \nonumber\\
&=& \sum_{a' \in \mathcal A, a' \neq a} \left[ \frac{\dif J^{\bm
\theta}_{\mu,\sigma}}{\dif \theta_{i,a}} - \frac{\dif J^{\bm
\theta}_{\mu,\sigma}}{\dif \theta_{i,a'}}\right]\Delta^{a'} +
o(\Delta). \nonumber
\end{eqnarray}
Substituting (\ref{eq50}) into the above equation, we always have
$J^{\bm \theta}_{\mu,\sigma} - J^{\bm \theta+\Delta}_{\mu,\sigma}
\geq 0$ for any small enough neighborhood of $\bm \theta$. This
analysis procedure is valid for each state $i \in \mathcal S$.
Therefore, the converged policy $d$ is a local optimum in the
randomized policy space. The theorem is proved.
\end{proof}

Because of the quadratic form of the variance related metrics, our
optimization problem (\ref{eq_prob}) usually is a multi-modal
function in the policy space, which is hard to find the global
optimum. This is also one of the reasons that Algorithm~\ref{algo1}
may only converge to a local optimum. Similar to the condition
discussed in Remark~2, we also have the following remark to clarify
the algorithm's global convergence.

\noindent\textbf{Remark~4.} If the system mean reward is the same
for all policy $d \in \mathcal D$, i.e., $J^d_{\mu}$ is independent
of $d$, Algorithm~\ref{algo1} will converge to the global optimum.

One example satisfying the condition in Remark~4 can be found in the
experiment of the next section, where the global convergence is
guaranteed. Although Algorithm~\ref{algo1} may only converge to a
local optimum, it has a form of policy iteration, which is similar
to the classical policy iteration in the traditional MDP theory.
Therefore, it is expected that Algorithm~\ref{algo1} has a similar
convergence behavior as that of the classical policy iteration.
However, a specific analysis of the algorithmic complexity of
Algorithm~\ref{algo1} is difficult to derive. It is because the
algorithmic complexity of the classical policy iteration is still an
open problem \citep{Littman95}. Nevertheless, it is often observed
that the policy iteration converges very fast. With the difference
formula (\ref{eq_dif}), we can see that each iteration in
(\ref{eq_PI}) will strictly improve the system performance. However,
the gradient-based approach has to carefully select step sizes,
otherwise it may jump to a worse policy. Such merits of the policy
iteration are desirable for policy gradient algorithms. For example,
the proximal policy optimization (PPO) is an efficient reinforcement
learning algorithm emerging in recent years \citep{Schulman15}. It
outperforms the traditional policy gradient algorithms in
large-scale reinforcement learning problems, such as optimizing
policy to drive a complex game called Dota~2 in the project OpenAI
Five. The PPO algorithm can also be viewed as an attempt to use
approximated policy iteration to guarantee a strict improvement of
each policy update, compared with the policy gradient method.
Therefore, it is reasonable to argue that Algorithm~\ref{algo1} also
has a fast convergence speed for small or medium size risk-sensitive
MDP problems.

For large-scale risk-sensitive MDP problems, we can utilize the
widely used approximation techniques to improve the performance of
Algorithm~\ref{algo1}, such as neuro-dynamic programming
\citep{Bertsekas96}, approximate dynamic programming
\citep{Powell07,Yu17}, deep neural networks \citep{Silver16}, and
other data-driven learning techniques. We may call it
\emph{risk-sensitive reinforcement learning}
\citep{Borkar2010,Huang17,Prashanth13}, which is still a new
research direction of reinforcement learning deserving further
investigations. Interesting topics may include the efficient
estimation of key quantities $\bm g_{\mu,\sigma}$, the robustness
analysis of algorithms with respect to the inaccurate values of $\bm
P$ or $\bm g_{\mu,\sigma}$ (see robust MDPs by
\cite{Chow15,Lim13,Nilim05}), etc.

\section{Numerical Experiments} \label{section_expriment}
In this section, we use an example about the fluctuation reduction
of wind power in energy storage systems to demonstrate the
applicability of our optimization method. The parameter setting and
engineering constraints have been largely simplified such that the
key idea of this example is concise and easy to follow.

\subsection{Wind Abandonment Not Allowed}
We consider a wind farm with a battery energy storage system
connected to the main power grid. Note that all the continuous
variables in this problem are discretized properly. The wind power
is assumed as a stationary stochastic process. We use a Markov chain
$\bm X := \{X_t\}$ to model the dynamics of wind power, where $X_t$
is the wind power at time epoch $t$ (hourly in this paper),
$t=1,2,\cdots$. The transition probability matrix of $X_t$ is
denoted as $\bm P$, which can be estimated from statistics
\citep{Luh14,Yang18}. The battery storage has a capacity $B$ and
$b_t$ denotes the remaining battery energy level at time $t$. The
system state is defined as $(X_t,b_t)$. The action is denoted as
$A_t$ which indicates the charging or discharging power of the
battery at time $t$. The battery has a maximal charging and
discharging power and the value domain of $A_t$ is $\mathcal A :=
\{-2,-1,0,1,2\}$. Positive element of $\mathcal A$ indicates
discharging power and negative one indicates charging power. When we
select $A_t$, it should be constrained by the remaining capacity of
the battery, i.e., $b_t - B\leq A_t \leq b_t$. If $A_t$ is adopted,
the battery energy level at the next time epoch is updated as
$b_{t+1} = b_{t} - A_t$.

The parameter setting of this problem is summarized in the following
tables. The number of wind power states is 6, and the correspondence
between the state and the wind power is shown in Table~\ref{tab1}.
The battery capacity is $B=5$, and the correspondence between the
battery states and battery energy level is shown in
Table~\ref{tab2}. Table~\ref{tab3} shows the actions and their
corresponding operations of the battery energy storage system.

\begin{table}[htbp]
    \centering
    \caption{{\small States of the wind power output}}\label{tab1}
    \begin{tabular}{ccccccc}
        \hline
        \toprule 
        State & 1 & 2 & 3 & 4 & 5 & 6\\ \hline
        Wind power/MW & 0 & 1 & 2 & 3 & 4 & 5 \\ \hline
    \end{tabular}
\end{table}

\begin{table}[htbp]
    \centering
    \caption{{\small States of the battery energy level}}\label{tab2}
    \begin{tabular}{ccccccc}
        \hline
        \toprule 
        State & 1 & 2 & 3 & 4 & 5 & 6\\ \hline
        Battery energy level/MWh & 0 & 1 & 2 & 3 & 4 & 5 \\ \hline
    \end{tabular}
\end{table}

\begin{table}[htbp]
    \centering
    \caption{{\small Scheduling actions of the battery}}\label{tab3}
    \begin{tabular}{cccccc}
        \hline
        \toprule 
        Action  & $-2$ & $-1$ & 0 & 1 & 2 \\ \hline
        Battery (dis)charging power/MW & $-2$ & $-1$ & 0 & $+1$ & $+2$ \\ \hline
    \end{tabular}
\end{table}

The wind state transition probability matrix $\bm P$ is calculated
based on the real data provided by the Measurement and
Instrumentation Data Center (MIDC) in the National Renewable Energy
Laboratory \citep{NREL}, in which the wind speed is measured since
1996.
\begin{equation}\label{eq_52}
\bm{P}= \left(
\begin{matrix}
0.53 & 0.18 & 0.19 & 0.04 & 0.01 & 0.05\\
0.51 & 0.08 & 0.20 & 0.08 & 0.02 & 0.11\\
0.35 & 0.11 & 0.19 & 0.11 & 0.03 & 0.21\\
0.27 & 0.15 & 0.15 & 0.14 & 0.03 & 0.26\\
0.14 & 0.11 & 0.13 & 0.15 & 0.05 & 0.42\\
0.09 & 0.03 & 0.06 & 0.06 & 0.03 & 0.73
\end{matrix}
\right).
\end{equation}
It is easy to verify that the ergodicity in
Assumption~\ref{assumption1} is satisfied in this experiment
according to the value of the transition probability matrix
(\ref{eq_52}) and the feasible actions.

In this subsection, we assume that the wind abandonment is not
allowed, i.e., all the wind power generated should go to either the
battery or the grid. The total output power of the system is denoted
as $Y_t$ and we have $Y_t = X_t + A_t$. The control policy $d$ of
the battery is a mapping from state $(X_t,b_t)$ to action $A_t$,
i.e., the charging or discharging power of the battery is determined
by $A_t = d(X_t,b_t)$. The optimization objective includes two
parts. One is the average power generated $\mathbb{E}[Y_t]$, which
reflects the economic benefit of the system. We can also further
include other economic metrics if needed, such as the operating cost
of the battery system. The other is the fluctuation of the output
power $\sigma^2[Y_t]$, which reflects the power quality or system
safety. If $\sigma^2[Y_t]$ is smaller, it indicates that the output
power is more stable and the power quality is better. The
coefficient $\beta$ can be viewed as a shadow price of the power
quality or safety for the grid system, which has been attracting
more attention by grid company recently \citep{Li14}. We aim at
maximizing the average output power $\mathbb{E}[Y_t]$ while reducing
the power fluctuation $\sigma^2[Y_t]$. Such an objective is exactly
a combined metric of mean and variance. Thus, we can apply the
approach proposed in this paper to study this fluctuation reduction
problem of renewable energy with storage systems. An illustrative
diagram of this optimization problem is shown in
Fig.~\ref{fig_wind_n}.

\begin{figure}[htbp]
\centering
\includegraphics[width=0.9\columnwidth]{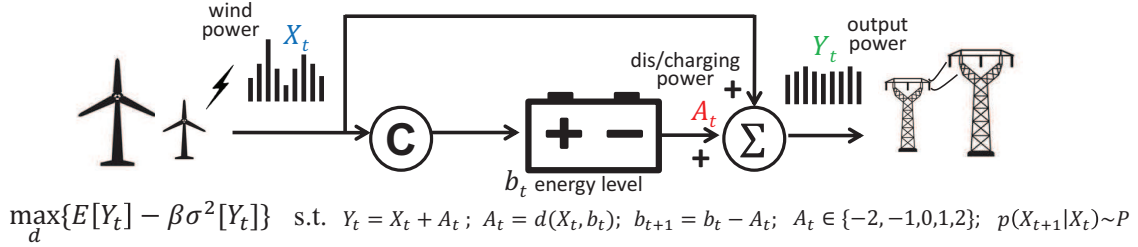}
\caption{The control of renewable energy with storage systems
without wind abandonment.}\label{fig_wind_n}
\end{figure}

We apply Algorithm~\ref{algo1} to find the optimal control policy of
the battery. Note that since the wind abandonment is not allowed,
the long-run average output power $\mathbb{E}[Y_t]$ is not affected
by actions. That is, the value of $\mathbb{E}[Y_t]$ is independent
of policies and it is determined by the statistics of wind power.
Therefore, this scheduling problem is a special case of our
optimization problem (\ref{eq_prob}) for mean-variance combined
metrics and it satisfies the condition discussed in Remarks~2\&4.
The necessary condition in Theorem~\ref{theorem2} is also a
sufficient condition for this case and Algorithm~\ref{algo1}
converges to the global optimum, as indicated by Remark~4. For
demonstration, we choose $\beta=0.1$ and the optimization results
are illustrated by Fig.~\ref{fig_ite1}. From Fig.~\ref{fig_ite1}, we
can see that the variance $\sigma^2[Y_t]$ is continually reduced
while $\mathbb{E}[Y_t]$ remains unvaried as 2.3605. The
mean-variance combined metric $\mathbb{E}[Y_t] - \beta
\sigma^2[Y_t]$ is continually improved until converged after 4
iterations and its optimum is 2.0826.

\begin{figure}[htbp]
    \centering
    \includegraphics[width=0.55\columnwidth]{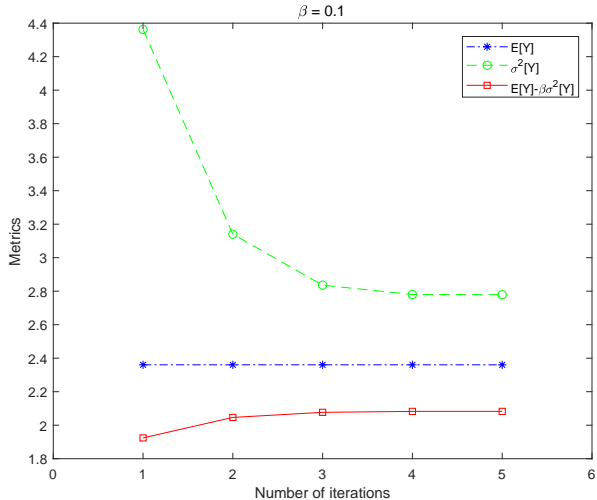}
    \caption{The convergence procedure of Algorithm~\ref{algo1} without wind abandonment.}\label{fig_ite1}
\end{figure}

Moreover, we randomly choose different initial policies and find
that Algorithm~\ref{algo1} always converges to the same optimal
policy, which is truly the global optimum. This is also consistent
with the results in Remarks~2\&4. The convergence results under some
different initial policies are illustrated in
Fig.~\ref{fig_pareto1}, where different colors and notations
indicate the convergence procedure from different initial policies.
With Fig.~\ref{fig_pareto1}, we can see that the algorithm converges
to the same point with the minimal variance, i.e., $\sigma^2[Y_t] =
2.7793$.

\begin{figure}[htbp]
    \centering
    \includegraphics[width=0.55\columnwidth]{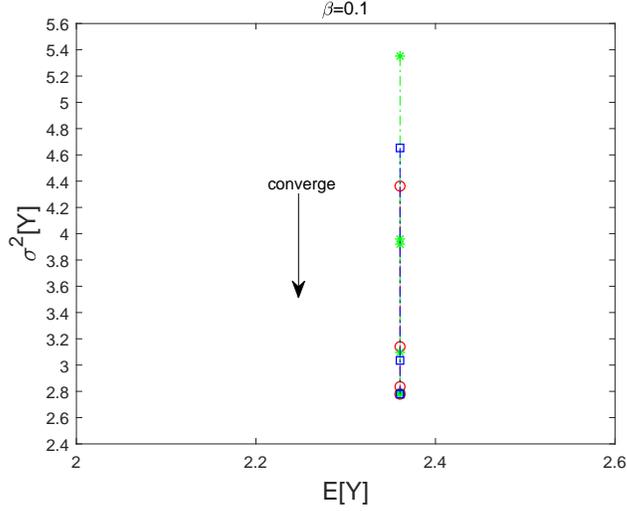}
    \caption{The same convergence point of Algorithm~\ref{algo1} under different initial policies, where Remarks~2\&4 work.}\label{fig_pareto1}
\end{figure}

To demonstrate the convergence efficiency of Algorithm~\ref{algo1},
we further conduct an experiment of traditional gradient-based
algorithm as a comparison. Since the control variable is the
charging or discharging power, it is denoted as $a \in \mathcal A =
\{-2,-1,0,1,2\}$. We use the randomized policy $\bm \theta$ such
that the parameters can be updated with gradient-descent algorithms.
With the performance derivative formula (\ref{eq_devpara}), we can
compute the gradient $\frac{\dif J_{\mu,\sigma}}{\dif \theta_{i,a}}$
for each $a \in \mathcal A$ and each system state $i=(X_t, b_t)$,
where $\theta_{i,a}$ is the probability of selecting action $a$ at
state $i$. We first choose an arbitrary initial policy, say $\bm
\theta_{i} = (0, 0, 1, 0, 0)$, i.e., the initial action is always
$a=0$. At each state $i \in \mathcal S$, after computing the value
of gradient $\frac{\dif J_{\mu,\sigma}}{\dif \theta_{i,a}}$, we find
the action $a_m$ whose gradient is maximal. Then, we update the
parameters as $\theta^{l+1}_{i,a_m} = \theta^{l}_{i,a_m} + \alpha_l$
and $\theta^{l+1}_{i,a} = \theta^{l}_{i,a}$ for all other $a \neq
a_m$, where $\alpha_l$ is the step size at the $l$th iteration and
we set it as a relatively large amount, say $\alpha_l = 1/\sqrt{l}$.
Then, we do normalization $\theta^{l+1}_{i,a} = \theta^{l+1}_{i,a} /
\sum_{a \in \mathcal A} \theta^{l+1}_{i,a}$ such that the sum of
$\bm \theta^{l+1}_{i}$ equals 1. We update the parameters $\bm
\theta$ repeatedly until the difference of $\bm \theta$ between two
successive iterations is smaller than a given threshold, say $0.1\%$
in ratio. The algorithm stops and outputs the current $\bm \theta^*$
as the optimal randomized policy. The experiment results are
illustrated by Fig.~\ref{fig_gradient}.

\begin{figure}[htbp]
    \centering
    \includegraphics[width=0.6\columnwidth]{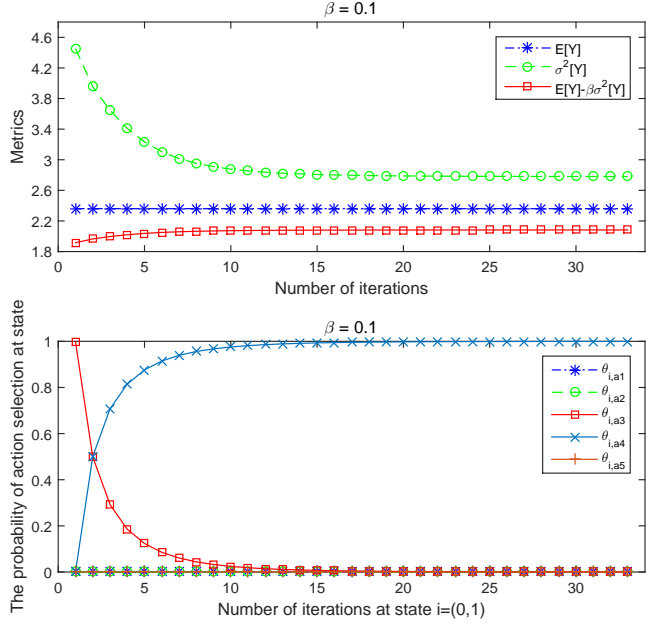}
    \caption{The experiment results of the gradient-based algorithm.}\label{fig_gradient}
\end{figure}

The two sub-figures in Fig.~\ref{fig_gradient} illustrate the
convergence curves of performance metrics and randomized parameters,
respectively. Here we only choose $\bm \theta_{i}$ as an example,
where $i=(0,1)$ means that the wind power is 0 and the battery
energy level is 1. From Fig.~\ref{fig_gradient} we can see that the
gradient-descent algorithm converges after 33 iterations, although
we choose a relatively large step size as $\alpha_l = 1/\sqrt{l}$.
At the convergence point, the performance metrics are
$\mathbb{E}[Y_t]=2.3605$, $\sigma^2[Y_t]=2.7802$, and
$\mathbb{E}[Y_t] - \beta \sigma^2[Y_t]=2.0825$, which are close to
and slightly worse than our previous results of
Algorithm~\ref{algo1}. We can also see that the convergence speed of
this gradient-based algorithm is much slower than that of our policy
iteration type algorithm. Moreover, from the convergence curves of
randomized parameters $\bm \theta_{i}$ in Fig.~\ref{fig_gradient},
we can see that $\bm \theta_{i}$ converges to a deterministic action
$(0,0,0,1,0)$. It means that the optimal action is discharging the
battery with power $a=1$ when the state is $i=(0,1)$, which looks
reasonable. This also verifies the optimality of deterministic
policies, as stated in Theorem~\ref{theorem_deterministic}.

\subsection{Wind Abandonment Allowed}
In this subsection, we study another scenario in which the wind
abandonment is allowed. All the parameter settings are the same as
those in the previous subsection. The difference is that we can
abandon an extra power $V_t$ if necessary. In this scenario, we
define the action as $U_t:=A_t-V_t$, where $0 \leq V_t \leq X_t$.
Variable $A_t$ has the same constraint as that in the previous
subsection, i.e., $A_t \in \mathcal A$ and $b_t - B\leq A_t \leq
b_t$. Although $A_t$ and $V_t$ are both variables, we can only use
$U_t$ as the decision variable after using the following reasonable
assumptions. If $0 \leq U_t \leq \min\{\max(\mathcal A), \ b_t\}$,
then we have $V_t=0$ and $A_t=U_t$, which means that the battery is
discharging and the wind power should not be abandoned. If
$\max\{\min(\mathcal A), \ b_t-B\} \leq U_t < 0$, then we have
$V_t=0$ and $A_t=U_t$, which means that the battery has potential
charging capacity unused and the wind power should not be abandoned.
If $-X_t \leq U_t < \max\{\min(\mathcal A), \ b_t-B\}$, then we have
$V_t=A_t-U_t$ and $A_t=\max\{\min(\mathcal A), \ b_t-B\}$, which
means that the battery is using full charging power capacity and
some wind power is abandoned.

In summary, the value domain of the decision variable $U_t$ is $-X_t
\leq U_t \leq \min\{\max(\mathcal A), \ b_t\}$ and it has the rule
as follows: if $\max\{\min(\mathcal A), \ b_t-B\} \leq U_t \leq
\min\{\max(\mathcal A), \ b_t\}$, then $V_t=0$ and $A_t=U_t$; if
$-X_t \leq U_t < \max\{\min(\mathcal A), \ b_t-B\}$, then
$V_t=A_t-U_t$ and $A_t=\max\{\min(\mathcal A), \ b_t-B\}$.
Therefore, we only need to optimize the variable $U_t$. Variables
$A_t$ and $V_t$ can be determined by the aforementioned rule. The
control policy $d$ is a mapping from $(X_t,b_t)$ to $U_t$, i.e.,
$U_t = d(X_t,b_t)$. The output power of the system is determined by
$Y_t = X_t + U_t$. Our goal is to find the optimal value of $U_t$ at
every state $(X_t,b_t)$ such that the mean-variance combined metric
$\mathbb{E}[Y_t]-\beta\sigma^2[Y_t]$ can be maximized. The control
procedure of this problem is illustrated in Fig.~\ref{fig_wind_a}.

\begin{figure}[htbp]
\centering
\includegraphics[width=0.9\columnwidth]{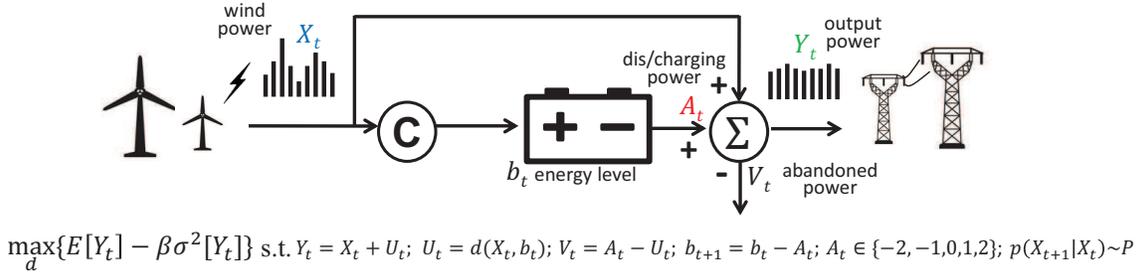}
\caption{The control of renewable energy with storage systems with
wind abandonment.}\label{fig_wind_a}
\end{figure}

We use Algorithm~\ref{algo1} to maximize the mean-variance combined
metric of this problem. We arbitrarily choose an initial policy from
the policy space. For different initial policies,
Algorithm~\ref{algo1} may converge to different local optima.
Fig.~\ref{fig_ite2} shows the convergence procedure when
$\beta=0.5$. It can be observed that the variance of power output
$\sigma^2[Y_t]$ is strictly reduced during each iteration, while the
average power output $\mathbb{E}[Y_t]$ has an increasing trend. The
combined metric $\mathbb{E}[Y_t]-\beta\sigma^2[Y_t]$ is continually
improved after several iterations until Algorithm~\ref{algo1}
converges. Algorithm~\ref{algo1} makes a balance between the wind
abandonment and the fluctuation reduction, which demonstrates the
effectiveness of Algorithm~\ref{algo1} for maximizing
$\mathbb{E}[Y_t]-\beta\sigma^2[Y_t]$. It is also observed that
Algorithm~\ref{algo1} converges after 5 to 7 iterations in most
cases, which shows the fast convergence speed of
Algorithm~\ref{algo1}.

\begin{figure}[htbp]
    \centering
    \includegraphics[width=0.55\columnwidth]{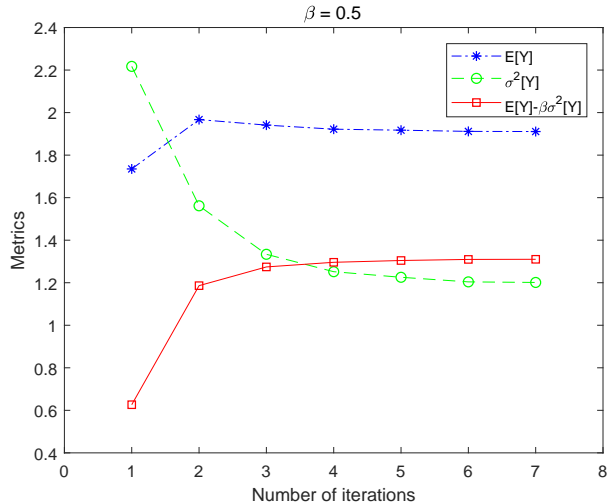}
    \caption{The convergence procedure of Algorithm~\ref{algo1} with wind abandonment.}\label{fig_ite2}
\end{figure}

Moreover, we investigate the convergence results of
Algorithm~\ref{algo1} under different initial policies and different
coefficients $\beta$. In each subgraph of Fig.~\ref{fig_pareto2}, we
randomly choose 5 different initial policies to implement
Algorithm~\ref{algo1}. Different curves indicate different
convergence trajectories under different initial policies. For the
case of $\beta=1$, we observe that Algorithm~\ref{algo1} converges
to two different local optima; For other cases, different initial
policies converge to the same local optimum. Every circle in
Fig.~\ref{fig_pareto2} represents a feasible solution. We prefer the
solutions with large $\mathbb{E}[Y_t]$ and small $\sigma^2[Y_t]$.
Some of the feasible solutions are the Pareto solutions, which
dominate other solutions in either $\mathbb{E}[Y_t]$ or
$\sigma^2[Y_t]$. All the Pareto solutions form the Pareto frontier,
which can be partly reflected in Fig.~\ref{fig_pareto2}.

\begin{figure}[htb]
    \centering
    \includegraphics[width=0.9\columnwidth]{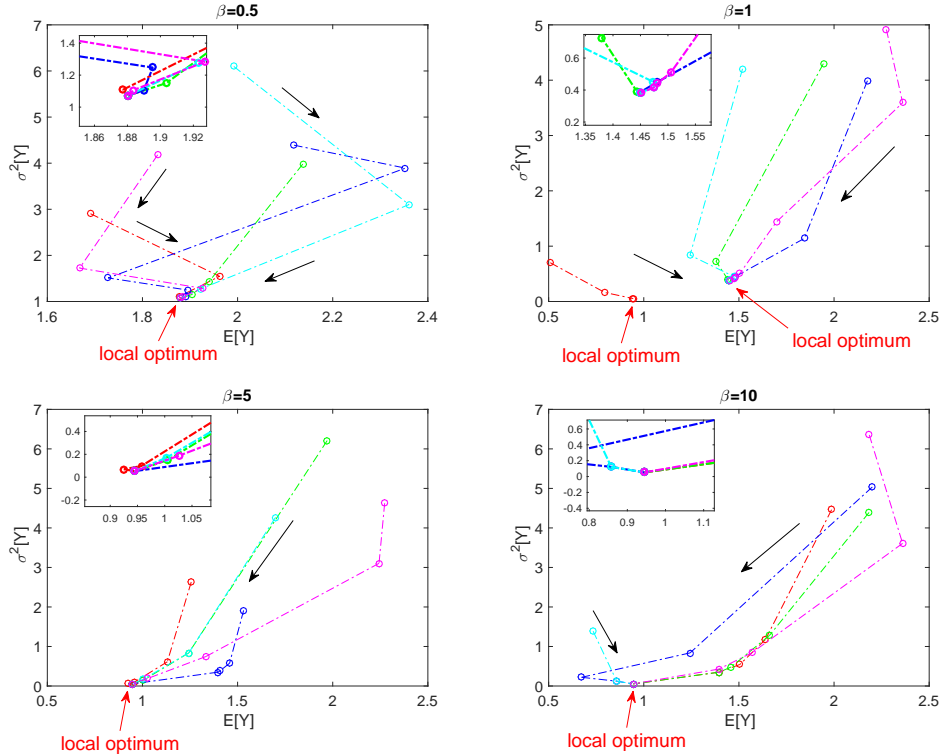}
    \caption{The convergence results of different initial policies and different $\beta$.}\label{fig_pareto2}
\end{figure}

Note that the solution space is too large for us to enumerate every
initial policies in Fig.~\ref{fig_pareto2}. We only choose some
representative solutions to outline the convergence procedures. The
top-left corner of each subgraph is the zooming-in area around the
convergence points. In future research, it is valuable to further
apply the exploration mechanisms such that the algorithm has
capability to jump out from local optima.

One intuitive way of exploration is to randomly sample the initial
policy such that Algorithm~\ref{algo1} can start at different
initial points and converge to different local optima. We can record
the optimal policy best so far until the computation budget is
depleted. In order to make the initial points as diverse as
possible, we may define proper metrics to measure the diversity of a
policy set. One example of heuristic ways to define the diversity
metric of a policy set $\mathcal D_0 \subseteq \mathcal D$ is as
follows.
\begin{equation}\label{eq_psi}
\Psi (\mathcal D_0) := \sum_{s \in \mathcal S}\Big | \bigcup_{d \in
\mathcal D_0} d(s) \Big|,
\end{equation}
where $|\cdot|$ is the number of elements in the set and
(\ref{eq_psi}) means the sum of the numbers of different actions at
all states. Thus, given the size of initial policy set $\mathcal
D_0$, if $\mathcal D_0$ is more diverse, we expect that
Algorithm~\ref{algo1} may converge to different local optima and the
best one is more likely the global optimum.

Another intuitive way is to introduce exploration schemes during the
step of policy generation in Algorithm~\ref{algo1}. That is, we
modify (\ref{eq_PI}) such that the generated new policy can have
more diversity. One feasible way is to use the $\epsilon$-greedy
scheme which is widely adopted in reinforcement learning
\citep{Sutton18}. At each state $s$, we adopt the action determined
by (\ref{eq_PI}) with probability $1-\epsilon$ and randomly select
actions from $\mathcal A$ with probability $\epsilon$, where
$\epsilon$ is a small positive number such as $\epsilon=0.05$. We
can record and output the best so far $J_{\mu,\sigma}$ until the
computation budget is depleted. Another more effective way is to use
the UCB (upper confident bound) method to balance the exploitation
and exploration during the search procedure
\citep{Agrawal95,Auer02}. We can define a counter $n(s,a)$ which
records the number of pair $(s,a)$ evaluated during algorithm
execution. We replace (\ref{eq_PI}) with the following scheme
\begin{equation}
d^{(l+1)}(i) := \argmax\limits_{a \in \mathcal A}\Bigg\{ r(i,a) -
\beta[r(i,a)-J_{\mu}]^2 + \sum_{j \in \mathcal S}p^a(i,j)
g_{\mu,\sigma}(j) + \gamma \sqrt{\frac{2 \ln
(\sum_a{n(s,a)})}{n(s,a)}} \Bigg\}, \nonumber
\end{equation}
where the square root part reflects the exploration benefit and
$\gamma$ is a coefficient balancing the exploitation and
exploration. Such scheme is effective especially considering the
estimation errors of $g_{\mu,\sigma}(j)$'s. We can let the
coefficient $\gamma$ vanishing when the estimation of
$g_{\mu,\sigma}(j)$ becomes more accurate. Similar schemes are
widely used in sample-based optimization algorithms, such as
Monte-Carlo tree search in AlphaGo \citep{Silver16} and adaptive
sampling in MDPs \citep{Chang07}. We can also resort to other
exploration techniques such as local search and global search
techniques widely used in evolutionary algorithms.

\section{Discussion and Conclusion} \label{section_conclusion}
The mean-variance combined metrics reflect both the average
performance and the risk-related performance. Since the variance
function is not additive, this mean-variance combined optimization
problem does not fit the standard model of MDPs. The classical
method of dynamic programming is not applicable. We study this
problem from a new perspective called the theory of
sensitivity-based optimization. The performance difference formula
is established to directly quantify the difference of the
mean-variance combined metrics under any two policies. The necessary
condition of the optimal policy is derived. The optimality of
deterministic policies is proved. We also develop a policy iteration
type algorithm to optimize the mean-variance combined metric. The
convergence of the algorithm is studied. Similar to the traditional
policy gradient approach, our approach also converges to a local
optimum in the mixed and randomized policy space, but with a more
efficient way as it has a form of policy iteration. The global
convergence of the algorithm is also discussed with the special
condition in Remarks~2\&4 or adopting some exploration and sampling
techniques. Experiment examples of the fluctuation reduction of wind
power with battery energy storage are conducted to demonstrate the
effectiveness of our approach.

One of the future research topics is to implement our approach in a
data-driven mode. This is a promising direction to study the
risk-sensitive reinforcement learning algorithm, since the variance
metric can reflect the risk-related factors. The online algorithm
implementation and the integration with neural networks deserve
further investigations, which is important to handle the issues of
model-absence and the curse of dimensionality. Another future
research topic is to extend our approach to optimize higher moment
metrics or even distribution optimization, since the variance is
only a second moment metric. For example, the third and the fourth
moment of rewards are also interesting metrics in statistics, which
reflect the skewness and kurtosis of reward distributions. One
recent work makes a good initiate on the
mean-variance-skewness-kurtosis analysis for the classic newsvendor
problem in a static optimization regime \citep{Zhang20}, while a
more general study in a wider field and dynamic optimization regime
will be a significant and challenging research topic.


\end{document}